\newcommand{\br}{\overline}
\newcommand{\R}{\mathbb R}
\newcommand{\N}{\mathbb N}
\theoremstyle{plain}
\newtheorem{theorem}{Theorem}
\newtheorem{lemma}[theorem]{Lemma}
\newtheorem{prop}[theorem]{Proposition}
\newtheorem{corollary}[theorem]{Corollary}
\theoremstyle{definition}
\newtheorem{definition}[theorem]{Definition}
\theoremstyle{remark}
\newtheorem{question}{Question}
\DeclareMathOperator{\diam}{\textup{\text{diam}}}
\DeclareMathOperator{\loc}{\textup{loc}}
\numberwithin{equation}{section}
\numberwithin{theorem}{section}
\renewcommand{\S}{\mathbb{S}}
\begin{document}
\title{Non-removability of Sierpi\'nski spaces}

\author{Dimitrios Ntalampekos}
\address{Institute for Mathematical Sciences, Stony Brook University, Stony Brook, NY 11794, USA.}
\email{dimitrios.ntalampekos@stonybrook.edu}

\author{Jang-Mei Wu}
\address{Department of Mathematics, University of Illinois, Urbana, IL 61822, USA.}
\email{jmwu@illinois.edu}

\date{\today}
\keywords{Sierpi\'nski space, Sierpi\'nski carpet, Removability, Conformal maps, Quasiconformal maps}
\subjclass[2010]{Primary 30C65, 57N15; Secondary 54C99}

\begin{abstract}
We prove that all Sierpi\'nski spaces in ${\S}^n$, $n\geq 2$, are non-removable for (quasi)conformal maps, generalizing the result of the first named author \cite{Ntalampekos:CarpetsNonremovable}. More precisely, we show that for any Sierpi\'nski space $X\subset \S^n$ there exists a homeomorphism $f\colon \S^n\to \S^n$, conformal in $\S^n\setminus X$,  that  maps $X$ to a set of positive measure and is not globally (quasi)conformal. This is the first class of examples of non-removable sets in higher dimensions.
\end{abstract}
\maketitle

\section{Introduction}

In this work we approach the problem of (quasi)conformal removability in $\S^n$ for $n\geq 3$. A compact set $K\subset \S^n$ is \textit{(quasi)conformally removable} if every homeomorphism $f\colon \S^n\to \S^n$ that is (quasi)conformal in $\S^n\setminus K$ is, in fact, (quasi)conformal everywhere. In dimension $2$, a set is conformally removable if and only if it is quasiconformally removable; see \cite{Younsi:removablesurvey} for a survey of results. The unavailability of the techniques involving the Beltrami equation in higher dimensions does not allow us to draw such a conclusion, and so far we only have the trivial implication that quasiconformal removability implies conformal removability for sets of measure zero, because $1$-quasiconformal mappings are conformal \cite[Theorem 15]{Gehring:1QC}.

Note that there are sets of positive measure that are conformally removable, a phenomenon that does not occur in dimension $2$. For example, let $K\subset \S^n$, $n\geq 3$, be a compact set with empty interior and positive measure such that $\S^n \setminus K$ is connected. Then a homeomorphism $f\colon \S^n\to \S^n$ that is conformal on $\S^n\setminus K$ is actually a M\"obius map on $\S^n\setminus K$ (by Liouville's Theorem \cite[Section 29]{Gehring:1QC}), and thus on $\S^n$ by continuity. This implies that $K$ is conformally removable. On the other hand, if $K=C\times [0,1]^{n-1}$, where $C\subset \R$ is a Cantor set (with positive measure or measure zero), then one can show that $K$ is non-removable for quasiconformal maps; see for example the argument in \cite[p.\ 6]{Ntalampekos:gasket}. Since $\R^n \setminus K$ is connected, it follows as before that $K$ is conformally removable and thus conformal and quasiconformal removability are not equivalent in dimensions greater than $2$.

The techniques used to prove that a set is (quasi)conformally removable in higher dimensions are the same as the planar ones. In particular, all sets of $\sigma$-finite Hausdorff $(n-1)$-measure are removable in $\S^n$ \cite[Theorem 35.1]{Vaisala:quasiconformal}, as also are boundaries of domains satisfying a certain quasihyperbolic condition \cite{JonesSmirnov:removability}.

There are very few non-trivial examples of quasiconformally non-removable sets in higher dimensions (e.g.\ \cite{Bishop:NonremovableR3}), the main difficulty being the lack of tools for the construction of homeomorphisms with good control of the quasiconformal distortion; in contrast, for planar constructions see \cite{Bishop:flexiblecurves}, \cite{KaufmanWu:removable},  \cite{Kaufman:graphnonremovable}, \cite{Ntalampekos:gasket}, \cite{Ntalampekos:CarpetsNonremovable}. It is not even known whether all sets of positive measure are quasiconformally non-removable. We pose a stronger question (see also \cite[Question 3]{Bishop:flexiblecurves}), known to have a positive answer (partially) in $2$ dimensions \cite{KaufmanWu:removable}:
	\begin{question}
Let $K\subset \S^n$ ($n\geq 2$) be a compact set of positive Lebesgue measure. Does there exist a homeomorphism $f$ of $\S^n$ that is quasiconformal on $\S^n\setminus K$ and maps $K$ (or a subset of $K$ of positive measure) to a set of measure zero?
	\end{question}

Note that a positive answer to this question would imply that all sets of positive Lebesgue measure are non-removable for quasiconformal maps.

In this work we prove that a large class of sets, namely Sierpi\'nski spaces, are non-removable for (quasi)conformal maps in $\S^n$. This generalizes the $2$-dimensional result from work of the first named author \cite{Ntalampekos:CarpetsNonremovable}. Sierpi\'nski spaces are higher-dimensional analogs of planar Sierpi\'nski carpets.

\begin{definition}\label{def:Sierpinski_space}
A continuum $X\subset \S^n$, $n\geq 2,$ is an $(n-1)$-dimensional Sierpi\'nski space if its complement $\S^n\setminus X$ consists of countably many components $U_i$, $i\in \N$, satisfying the following conditions:
\begin{enumerate}[\textup(1)]
\item $\S^n\setminus U_i$ is an $n$-cell for each $i\in \N$,
\item $\br {U_i}\cap \br {U_j}=\emptyset$ for $i\neq j$,
\item $\bigcup_{i\in \N} U_i$ is dense in $\S^n$, and
\item $\diam(U_i)\to 0$ as $i\to \infty$.
\end{enumerate}
\end{definition}

In $\S^n$, $n\geq 3$,
the boundary components $\partial U_i$, $n\in\N$, of $\S^n\setminus X$ are not assumed to be flat spheres in Definition \ref{def:Sierpinski_space}.
In  $\S^2$,  condition $(1)$  is equivalent to requiring that $\partial U_i$, $i\in \N$, are Jordan curves, or equivalently flat $1$-spheres.

All $(n-1)$-Sierpi\'nski spaces in $\S^n$, for a fixed $n\geq 2$, are homeomorphically equivalent. This topological result was proved by Whyburn for $n=2$, and by  Cannon for dimensions $n=3$ and $n\geq 5$. Cannon's proof  is based on the known Annulus Theorem at that time. Since the Annulus Conjecture has now been proved in dimension $4$ (see Theorem \ref{Annulus:thm}), Cannon's proof extends to  $n=4$ also.

\begin{theorem}[\cite{Whyburn:theorem}, \cite{Cannon:Sierpinski}]\label{Whyburn_Cannon:thm}
If $X,Y\subset \S^n$, $n\geq 2$, are $(n-1)$-dimensional Sierpi\'nski spaces, then there exists a homeomorphism $f\colon X\to Y$.
\end{theorem}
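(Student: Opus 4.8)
I would build the homeomorphism $h\colon X\to Y$ as a limit of \emph{combinatorial} matchings between nested finite decompositions of the two spaces, and I would \emph{not} expect $h$ to extend to a homeomorphism of $\S^n$: the boundary spheres $\partial U_i$ need not be locally flat, and a wildly embedded hole of $X$ cannot be carried to a flat one by any ambient homeomorphism. The organizing principle is that a compact metrizable space is recovered, as a set with its topology, from a coherent (``inverse'') system of finite closed covers of shrinking mesh, its points being the nested sequences of cells that contract to them. So it suffices to fix a ``standard'' $(n-1)$-dimensional Sierpi\'nski space $\Sigma_n\subset\S^n$ (built self-similarly with flat boundary spheres) carrying an evident nested sequence of cell decompositions $\mathcal Q_1,\mathcal Q_2,\dots$ of mesh $\to 0$, and to produce for $X$ a matching system $\mathcal P_1,\mathcal P_2,\dots$ with the same incidence patterns and with the holes of $X$ playing the role of the ``removed'' cells of $\Sigma_n$. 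Applying the same to $Y$ then gives $X\cong\Sigma_n\cong Y$.

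The one structural input I would establish at the outset is that each $\overline{U_i}$ is a \emph{cellular} subset of $\S^n$. Indeed, by condition (1) the set $\S^n\setminus U_i$ is an $n$-cell, so $\innt(\S^n\setminus U_i)$ is homeomorphic to $\R^n$; hence any compact subset of it lies in a round ball there, which is a locally flat $n$-cell in $\S^n$, and whose complement is again an $n$-cell by the Generalized Schoenflies Theorem and is a neighborhood of $\overline{U_i}$. Letting the ball exhaust $\innt(\S^n\setminus U_i)$ exhibits $\overline{U_i}$ as a nested intersection of $n$-cells; the same holds for the $\overline{V_j}$. Cellularity is exactly what permits hypersurfaces in $\S^n$ to be pushed off the holes even when $\partial U_i$ is wild.

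The inductive step — refining $\mathcal P_k$ to $\mathcal P_{k+1}$ so as to match $\mathcal Q_{k+1}$ — is the heart of the matter. Within each cell of $\mathcal P_k$ one introduces new locally flat cutting spheres in $\S^n$ realizing the combinatorics of the corresponding refinement in $\Sigma_n$, using conditions (2) and (3) to absorb one more finite family of holes of $X$ into the decomposition and using cellularity to keep these spheres disjoint from the (possibly wild) holes. Along each annular region between consecutive cutting levels one records, via the Annulus Theorem (Theorem~\ref{Annulus:thm}) — the Schoenflies Theorem when $n=2$ — a homeomorphism onto the corresponding region of $\Sigma_n$, so that the combinatorial identifications are geometrically coherent and so that, after a routine shrinking of the new cells (again built from the Annulus Theorem), one has $\operatorname{mesh}(\mathcal P_k)\to 0$, which condition (4) makes consistent. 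Passing to the inverse limit of the coherent system $(\mathcal P_k)$ then matches $X$ with the inverse limit of $(\mathcal Q_k)$, namely $\Sigma_n$.

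The main obstacle I anticipate is precisely this refinement step in the presence of wild hole boundaries: since one cannot match a hole of $X$ to a flat removed cell of $\Sigma_n$ by an ambient homeomorphism, the entire construction has to be arranged so that only the combinatorial pattern and the intrinsic topology of $X$ are used, with cellularity of the holes — rather than any flatness — and the Annulus Theorem carrying the geometric load; keeping each refinement compatible with the already-fixed coarser matchings is the delicate point. This is also exactly why dimension $4$ was the missing case in Cannon's work: the argument requires the $4$-dimensional Annulus Theorem, which has only recently been established.
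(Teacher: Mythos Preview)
The paper does not supply its own proof of this theorem: it is quoted as a background result of Whyburn ($n=2$) and Cannon ($n\ge 3$, with $n=4$ now covered by the Annulus Theorem). So there is no ``paper's proof'' to compare against directly.

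That said, your sketch is a faithful outline of Cannon's original strategy --- nested cubical-type subdivisions matched combinatorially, with cellularity of the $\overline{U_i}$ (rather than flatness of $\partial U_i$) and the Annulus Theorem carrying the geometric work, and the homeomorphism recovered as an inverse limit. Your observation that $h$ need not extend to $\S^n$ and your identification of dimension $4$ as the case that waited on the Annulus Theorem are both on the mark.

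It is worth noting, for comparison, that the machinery this paper develops for its \emph{main} theorem (which also produces a homeomorphism between Sierpi\'nski spaces, with extra conformal control) takes a somewhat different route from your direct inverse-limit matching. Rather than pushing locally flat cutting spheres off the wild holes by hand, the paper first invokes Cannon's Lemma~\ref{Cannon:lemma} to re-embed $X$ so that all boundary spheres are flat, then collapses the small holes to points via a decomposition map (Lemma~\ref{decomposition:lemma}), transfers a cubical subdivision through this cellular map using the Approximation Theorem (Corollary~\ref{Approximation:cor}), and only then builds the homeomorphism cell-by-cell. Your approach buys conceptual directness and stays intrinsic to $X$; the paper's approach buys cleaner bookkeeping by reducing to the flat case and outsourcing the ``push-off-the-holes'' step to the cellular approximation machinery. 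Both ultimately rest on the Annulus Theorem.
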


Our main result is the following:

\begin{theorem}\label{Theorem:main}
Let $X\subset \S^n$, $n\geq 2$, be an $(n-1$)-dimensional Sierpi\'nski space. Then there exist a Sierpi\'nski space $Y \subset \S^n$ of positive Lebesgue measure and a homeomorphism $f\colon \S^n\to \S^n$ which maps $X$ onto $Y$ and  is conformal on $\S^n\setminus X$.
\end{theorem}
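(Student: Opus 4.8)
The plan is to construct the homeomorphism $f$ and the target space $Y$ simultaneously, working one complementary component at a time and exhausting $\S^n \setminus X$ in the limit. First I would enumerate the components $U_i$ of $\S^n \setminus X$ so that (as in Definition \ref{def:Sierpinski_space}(4)) $\diam(U_i) \to 0$, and build $f$ as a locally uniform limit of homeomorphisms $f_k \colon \S^n \to \S^n$, where $f_{k}$ agrees with $f_{k-1}$ outside a suitably chosen neighborhood and $f_k$ is conformal (in fact Möbius) on each of the components $U_1, \dots, U_k$ that have already been "processed." At stage $k$, having fixed the images of $U_1, \dots, U_{k-1}$, I would pick a small round ball $B \subset U_k$, define $f_k$ on $U_k$ to be conformal there, and carefully redefine $f_k$ on the cell $\S^n \setminus U_k$ so that it remains a homeomorphism matching the boundary values already committed to on $\partial U_k$, while being identity-like far away. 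The quantitative point is that $f_k$ should move points by at most, say, $2^{-k}$ outside a small neighborhood of the finitely many already-chosen balls, so the limit $f = \lim f_k$ exists, is a homeomorphism, and restricted to $\bigcup_i U_i$ (which is dense and open of full complement in $X$, which we want to have positive measure) is conformal.

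The mechanism for gaining positive measure is the following: inside each $U_i$ choose a round ball $B_i$, and arrange that $f$ on $B_i$ is a Möbius map scaling by a factor comparable to $1$ (not shrinking), while $f$ on $U_i \setminus B_i$ absorbs the topological/metric adjustment. Then $f(X) = Y$ will contain (be squeezed against) the images $f(B_i)$, whose complement in $f(\S^n)$ we control; by choosing the radii of the $B_i$ and the distortion so that $\sum_i |f(U_i \setminus B_i)|$ is small and $\sum_i |f(B_i)| < |\S^n|$ but $|f(X)| = |\S^n| - \sum_i |f(U_i)| > 0$, we get $|Y| > 0$. This is precisely the strategy behind the planar result \cite{Ntalampekos:CarpetsNonremovable}, and I would follow its skeleton: the key numerical bookkeeping is that the "waste" caused by the non-conformal adjustment on $U_i \setminus B_i$ must be summably small, which forces $B_i$ to occupy most of $U_i$ in measure, uniformly enough.

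The genuinely new difficulty — and the reason Theorem \ref{Whyburn_Cannon:thm} and the Annulus Theorem enter — is the interpolation step in dimension $n \geq 3$: given the $n$-cell $\S^n \setminus U_i$ with its (possibly wild, non-flat) boundary sphere $\partial U_i$, and given prescribed homeomorphic boundary data on $\partial U_i$ coming from the conformal map on $U_i$, I must extend this to a homeomorphism of the cell $\S^n \setminus U_i$ onto the correspondingly prescribed target cell, with good control of the displacement. In the plane this is handled by Schoenflies-type extension and explicit estimates; in higher dimensions one cannot take flatness for granted, so I expect to reduce to the model situation via Theorem \ref{Whyburn_Cannon:thm} (to standardize the combinatorial/topological type of $X$) and the Annulus Theorem \ref{Annulus:thm} (to get collar neighborhoods of the $\partial U_i$ and to build the extensions across annular regions between concentric spheres). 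The main obstacle, then, is producing these cellwise extensions as genuine homeomorphisms with uniformly controlled modulus of continuity and with the measure-estimate budget respected, without any smoothness or quasiconformality of the gluing; conformality is only required on the interiors $U_i$, which is what makes the construction possible at all, but assembling countably many such pieces into a single global homeomorphism of $\S^n$ is where the care lies.
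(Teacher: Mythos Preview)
Your iterative skeleton and the recognition that Liouville forces $f|_{U_i}$ to be M\"obius are on track, but the stated mechanism for gaining positive measure is internally inconsistent with the conformality requirement. You propose to fix a round ball $B_i\subset U_i$, make $f|_{B_i}$ M\"obius with scale factor $\approx 1$, and let ``$f$ on $U_i\setminus B_i$ absorb the topological/metric adjustment.'' But the theorem demands that $f$ be conformal on \emph{all} of $U_i$; for $n\ge 3$ this means a single M\"obius transformation on the whole of $U_i$, so there is no freedom whatsoever to ``absorb'' anything on $U_i\setminus B_i$, and $f(X)$ is of course disjoint from (not ``containing'') every $f(B_i)$. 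The paper's device is the opposite of yours: take $f|_{U_i}$ to be a similarity that \emph{shrinks} $U_i$ (keeping one fixed component $U_0$), so that $\sum_i |f(U_i)|<|\S^n|$ and hence $|Y|=|\S^n|-\sum_i|f(U_i)|>0$. All the non-conformal adjustment takes place outside $\bigcup_i U_i$, i.e.\ inside $X$, never inside any $U_i$. This is exactly where the higher-dimensional situation diverges from the planar one you cite: in $\S^2$ the Riemann mapping theorem lets you send $U_i$ conformally to any prescribed Jordan disk, which is why the $B_i$ bookkeeping works there; in $\S^n$, $n\ge 3$, the image $f(U_i)$ is forced to be a M\"obius copy of $U_i$, and shrinking is essentially the only knob.

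A second gap is the convergence scheme. Processing one $U_k$ at a time with displacement $\le 2^{-k}$ is not achievable as written: the first time $U_k$ is handled you must move $\partial U_k$ to the boundary of its shrunken image, a displacement comparable to $\diam(U_k)$, not $2^{-k}$; and there is no a priori summability of the $\diam(U_k)$. The paper controls convergence by a different, batched iteration via Lemma~\ref{lemma:subdivisions}: at stage $k$ one treats simultaneously all components of diameter $\ge \varepsilon_k$, and the lemma produces matched $\varepsilon_k$-subdivisions of the source and target $n$-cells so that $h_{k+1}$ is constructed cell by cell inside target cells of diameter $<\varepsilon_k$, forcing $\|h_{k+1}-h_k\|_\infty<\varepsilon_k$ and the same for the inverses. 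The interpolation step you anticipate is the content of that lemma, but it is carried out not by direct collar arguments on each $\partial U_i$; rather one first collapses the small $\overline{U_j}$ to points by a decomposition map (Lemma~\ref{decomposition:lemma}), applies a global extension (Corollary~\ref{Extension:cor}), and then invokes the cellular approximation theorem (Corollary~\ref{Approximation:cor}) to recover honest $n$-cells for the subdivision.
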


The statement is  different from the $2$-dimensional result in \cite{Ntalampekos:CarpetsNonremovable}. It is proved in \cite{Ntalampekos:CarpetsNonremovable} that if $X,Y \subset \S^2$ are \textit{any}  Sierpi\'nski carpets (i.e., $1$-dimensional Sierpi\'nski spaces), then there exists a homeomorphism $f\colon \S^2\to \S^2$ which maps $X$ onto $Y$ and is conformal on $\S^2\setminus X$. We do not expect such a strong statement in higher dimensions.  Firstly, the boundary components $\partial {U_i}$, $i\in \mathbb N$, of the complement of a Sierpi\'nski space are not necessarily flat spheres. Secondly, even under the stronger assumption that all $\partial U_i$, $i\in \mathbb N$, are flat $(n-1)$-spheres in $\S^n$, topological open balls  $U_i$, $i\in \mathbb N$, need not be quasiconformally equivalent to an open Euclidean ball  \cite{GehringVaisala:QuasiconformalitySpace} -- in contrast to the $2$-dimensional case in which the Riemann Mapping Theorem can be invoked.

The  proof of Theorem \ref{Theorem:main} follows the lines of Whyburn and Cannon. However, in order to prove the (quasi)conformal non-removability of an $(n-1)$-Sierpi\'nski space $X$ in $\S^n$, we are not allowed to alter the topology of the complementary components of $X$ in  $\S^n$, but we can only use (quasi)conformal deformations of them. For this reason, we use a decomposition of $\S^n$ whose degenerate elements are not necessarily $n$-cells; see Lemma \ref{decomposition:lemma}. This entails some technical complications.

\begin{corollary}\label{Cor:nonremovability}
All $(n-1)$-dimensional Sierpi\'nski spaces in $\S^n$, $n\geq 2$, are non-removable for (quasi)\-conformal maps.
\end{corollary}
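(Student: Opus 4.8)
The plan is to deduce Corollary \ref{Cor:nonremovability} from Theorem \ref{Theorem:main} by checking that the homeomorphism the theorem provides cannot be globally (quasi)conformal. Recall that, by definition, a compact set $X\subset\S^n$ is non-removable for (quasi)conformal maps as soon as there is a homeomorphism of $\S^n$ that is (quasi)conformal on $\S^n\setminus X$ but not on all of $\S^n$; so it suffices to produce a homeomorphism of $\S^n$ that is conformal on $\S^n\setminus X$ — hence, being $1$-quasiconformal there, also quasiconformal on $\S^n\setminus X$ — yet is neither globally conformal nor globally quasiconformal. The only analytic input I would use is that a quasiconformal homeomorphism $g$ of $\S^n$, and therefore also a conformal one (which is $1$-quasiconformal), satisfies Lusin's conditions $(N)$ and $(N^{-1})$: both $g$ and $g^{-1}$ send sets of Lebesgue measure zero to sets of measure zero, so $g$ preserves the dichotomy ``measure zero vs.\ positive measure''.

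Given an $(n-1)$-dimensional Sierpi\'nski space $X\subset\S^n$, I would first apply Theorem \ref{Theorem:main} to get a homeomorphism $f\colon\S^n\to\S^n$, conformal on $\S^n\setminus X$, with $f(X)=Y$ a Sierpi\'nski space of positive measure. If $X$ has measure zero, $f$ carries the null set $X$ onto the positive-measure set $Y$, so by the previous paragraph $f$ is not globally conformal and not globally quasiconformal, while it is conformal (a fortiori quasiconformal) on $\S^n\setminus X$; hence $X$ is non-removable for both classes. If instead $X$ has positive measure, I would want the mirror-image conclusion: a homeomorphism of $\S^n$, conformal on $\S^n\setminus X$, carrying $X$ onto a Sierpi\'nski space of measure zero — then the same obstruction applied to the inverse map, which would have to carry a null set back onto the positive-measure set $X$, rules out global (quasi)conformality. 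Such a map is obtained from the construction behind Theorem \ref{Theorem:main}: there $f$ is assembled by prescribing conformal images of the complementary balls $U_i$ of $X$, and the target $Y=\S^n\setminus\bigcup_i f(U_i)$ has positive measure only because the $f(U_i)$ are chosen with $\sum_i|f(U_i)|<|\S^n|$; choosing them with $\sum_i|f(U_i)|=|\S^n|$ instead — which is compatible with $\diam f(U_i)\to 0$, pairwise disjoint closures, and dense union, as happens for a ``thin'' Sierpi\'nski space like the standard self-similar one — forces $|Y|=0$ while leaving untouched the part of the construction that makes $f$ a homeomorphism conformal off $X$.

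So in both cases $X$ supports a homeomorphism of $\S^n$, conformal on the complement of $X$, that changes the null/positive-measure status of $X$ and is therefore not globally (quasi)conformal, which is exactly Corollary \ref{Cor:nonremovability}. The one point that goes beyond the verbatim statement of Theorem \ref{Theorem:main}, and the place I expect the (modest) work to be, is the positive-measure case: one must confirm that the construction of the theorem, presented for a positive-measure target, runs just as well toward a measure-zero target. I expect this to be a routine variant, since the decomposition of $\S^n$ in Lemma \ref{decomposition:lemma} and the estimates forcing $f|_{\S^n\setminus X}$ to be conformal depend only on the topology of the pieces $f(U_i)$ and the conditions of Definition \ref{def:Sierpinski_space}, not on their Lebesgue measures.
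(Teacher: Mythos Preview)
Your treatment of the two cases lines up with the paper only for $|X|=0$. The paper remarks, immediately after stating the corollary, that this case is immediate from Theorem \ref{Theorem:main} together with the fact that quasiconformal maps preserve null sets \cite[Theorem 33.2]{Vaisala:quasiconformal}, exactly as you argue.

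For $|X|>0$ the paper does something different from what you propose, and your proposed route has a gap. The paper does \emph{not} redirect the construction toward a measure-zero target. Instead it exploits the freedom already present in the proof of Theorem \ref{Theorem:main} to shrink each $U_i$ by an arbitrary factor: choosing $|h(U_i)|\le |U_i|^{i+1}$, the Jacobian of $h^{-1}$ on $h(U_i)$ is the constant $|U_i|/|h(U_i)|$, and a direct computation shows
\[
\int_{\bigcup_i h(U_i)} (J_{h^{-1}})^{1+\varepsilon}\;\ge\;\sum_i |U_i|^{1-\varepsilon i}=\infty
\]
for every $\varepsilon>0$, since $\diam(U_i)\to 0$. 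Gehring's higher-integrability theorem \cite{Gehring:Lpintegrability} then forces $h^{-1}$, hence $h$, to be non-quasiconformal. This argument is uniform in $|X|$ and requires no modification of the construction.

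Your alternative---running the construction toward a target $Y$ with $|Y|=0$---is not the ``routine variant'' you claim. The map in Theorem \ref{Theorem:main} is built iteratively via Lemma \ref{lemma:subdivisions}: at each stage finitely many components $V_1,\dots,V_M$ are mapped by similarities into the interior of a cell $Z_\alpha$ of the current subdivision. To force $|Y|=0$ you would need, at every stage and in every cell, the similar copies $D_1,\dots,D_M$ to fill a definite fraction of $|Z_\alpha|$. This is a packing problem for similar copies of topological balls with no uniform lower bound on $|U_i|/\diam(U_i)^n$; it can be solved (by taking many more components at each stage and a Vitali-type iteration), but it requires a genuine argument and a reorganization of the inductive step---it is not just a change of parameters. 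The paper's use of Gehring's inequality bypasses this entirely.
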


Since a (quasi)conformal map $f$ of $\S^n$ necessarily maps sets of measure zero to sets of measure zero \cite[Theorem 33.2]{Vaisala:quasiconformal}, this Corollary follows immediately from Theorem \ref{Theorem:main} for Sierpi\'nski spaces having zero measure. If a Sierpi\'nski space has positive measure, then the proof of the non-removability requires an extra ingredient and it is given at the end of Section \ref{section:mainproof}.

We give the proof of Theorem \ref{Theorem:main} in Section \ref{section:mainproof}, based on a topological lemma (Lemma \ref{lemma:subdivisions}) proved in Section \ref{section:subdivisions}.
Finally, Section \ref{section:facts} contains several topological facts that are used throughout the paper.

\subsection*{Acknowledgments:}
The authors would like to thank the anonymous referee for reading the paper carefully and providing insightful comments. Part of this research was conducted while the first named author was visiting University of Illinois Urbana-Champaign. He thanks the faculty and the staff of the Department of Mathematics for their hospitality. The research of the second named author is partially supported by  Simons Foundation Collaboration Grant $\# 353435$.

\section{Proof of main result}\label{section:mainproof}

An $n$-dimensional CW-complex $K$ is a \emph{cubical complex} if  each cell in $K$ is isomorphic to a unit cube $[0,1]^k$ for some $0\leq k \leq n$, and the intersection $\sigma \cap \sigma'$
of any $k$-cells $\sigma$ and $\sigma'$, if nonempty,  is a $j$-cell in $K$ for some $0\leq j \leq k-1$. The $k$-cells in $K$ are  called $k$-cubes, and the  subcomplex $K^{[k]}$ consisting of all cubes of dimension at most $k$ is called the \textit{$k$-skeleton of $K$}.
The union  $|K|$ of all cubes in $K$ is its space.

\begin{definition}
Let $\varepsilon >0$, and $Y$ be  a closed subset of $\S^n$.  A collection  $\{Y_{\alpha}\}$ of $n$-cubes  is an \emph{$\varepsilon$-subdivision of $Y$} if
there exists a  finite  cubical complex $K$ which  has $\{Y_{\alpha}\}$  as all its $n$-cubes, each of which has diameter less than $\varepsilon$, and has space  $|K|=\bigcup Y_{\alpha}=Y$.

Let $X \subset \S^n$ be an $(n-1)$-dimensional Sierpi\'nski space.
A collection  $\{X_{\alpha}\}$ of $(n-1)$-dimensional Sierpi\'nski spaces  is called
an \emph{$\varepsilon$-subdivision  of $X$},
if  there
exist  components $U_1,\dots,U_N$ of $\S^n\setminus X$  and
an $\varepsilon$-subdivision  $\{Y_{\alpha}\}$ of the set $Y \coloneqq\S^n \setminus \bigcup_{j=1}^N  U_j$, having the same number of elements as that of $\{X_{\alpha}\}$, for which
\begin{enumerate}
\item the boundaries of the $n$-cubes in $\{Y_{\alpha}\}$ do not intersect $\bigcup_{j=N+ 1}^\infty \overline{U_j}$, and

\item $X_{\alpha}= X\cap Y_{\alpha}$ for each $\alpha$.

\end{enumerate}
In this case, we call  cube $Y_{\alpha}$ the  \emph{hull} of the Sierpi\'nski space $X_{\alpha}$.

\end{definition}

\bigskip

We now state the key lemma for the proof of Theorem \ref{Theorem:main}.

\begin{lemma}\label{lemma:subdivisions}
Let $n\geq 2$, and $X$ be an $(n-1)$-dimensional Sierpi\'nski space in $\S^n$.
Let $\varepsilon >0$, and $U_1,\dots,U_N$ be a collection of components in $\S^n\setminus X$ for which the remaining components have diameters less than $\varepsilon$, and let  $G_1,\dots,G_{N} \subset \S^n$ be open sets with pairwise disjoint closures for which $\S^n\setminus G_1,\dots,\S^n \setminus G_{N}$ are $n$-cells. Given $N$
  orientation-preserving homeo\-morphisms $h_j\colon \partial U_j \to \partial G_j$, $j\in \{1,\dots,N\}$, there exist
 a homeomorphism
 \[h\colon \S^n\setminus \bigcup_{j=1}^{N} U_j \to \S^n\setminus \bigcup_{j=1}^{N} G_j \]
  which extends $h_j$, $j\in \{1,\dots,N\}$, an  $\varepsilon$-subdivision $\{X_\alpha\}$ of $X$,  and an $\varepsilon$-subdivision $\{Z_\alpha\}$ of $\S^n \setminus \bigcup_{j=1}^N G_j$ so that  $\{h^{-1}(Z_\alpha)\}$ are the hulls of the Sierpi\'nski spaces
 $\{X_\alpha\}$.

\end{lemma}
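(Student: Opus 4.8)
The plan is to build the homeomorphism $h$ and the two $\varepsilon$-subdivisions simultaneously by an inductive refinement scheme in the spirit of Whyburn and Cannon, but carried out on both sides of $h$ at once so that cubes on the target side pull back to hulls on the source side. First I would treat the ``base'' piece: we are given the boundary homeomorphisms $h_j\colon\partial U_j\to\partial G_j$, and we need to extend them to a homeomorphism between $\S^n\setminus\bigcup_{j=1}^N U_j$ and $\S^n\setminus\bigcup_{j=1}^N G_j$. Since $\S^n\setminus U_j$ and $\S^n\setminus G_j$ are $n$-cells, one first extends each $h_j$ across the cell $\S^n\setminus U_j$ (using that the $G_j$ have pairwise disjoint closures and the same is true of the $U_j$), obtaining a homeomorphism defined on a neighbourhood of each $\partial U_j$; then one must glue these local extensions into a single homeomorphism of the multiply-connected regions. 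Here I expect to invoke the topological facts from Section \ref{section:facts} (in particular extension/gluing results, and the Annulus Theorem, Theorem \ref{Annulus:thm}, to handle the collars). This is essentially the construction underlying Theorem \ref{Whyburn_Cannon:thm}, adapted to keep the complementary components intact rather than flattening them.

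Next I would construct the subdivisions. On the target side, $\S^n\setminus\bigcup_{j=1}^N G_j$ is a compact set, and I want an $\varepsilon$-subdivision $\{Z_\alpha\}$ of it into small $n$-cubes forming a cubical complex. The subtlety, encoded in condition (1) of the definition of $\varepsilon$-subdivision, is that the boundaries of the cubes must avoid $\bigcup_{j>N}\overline{U_j'}$ on the source side, where $U_j'=h^{-1}$-image data; equivalently, after pulling back by $h$, the $(n-1)$-skeleton of the complex must miss all the remaining (small, by hypothesis) complementary components. I would first fix a fine cubical grid on $\S^n$ of mesh comparable to a small $\delta\ll\varepsilon$, intersect it with the region, and then perturb the grid slightly — using that $\bigcup_{j>N}\overline{U_j}$ has empty interior and that its components shrink, together with the continuity and uniform continuity of $h$ and $h^{-1}$ on the compact region — so that the perturbed $(n-1)$-skeleton, pulled back by $h$, meets none of the $\overline{U_j}$, $j>N$. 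One must also choose $\delta$ small enough that the pulled-back cubes have diameter less than $\varepsilon$; this is where uniform continuity of $h^{-1}$ enters. Setting $X_\alpha=X\cap h^{-1}(Z_\alpha)$ and checking that each $X_\alpha$ is again an $(n-1)$-Sierpi\'nski space (its complement consists of the $U_j$, $j>N$, that fall inside, plus the new ``windows'' cut by the cube boundaries, each of which bounds an $n$-cell since the cube is an $n$-cell and $h^{-1}$ is a homeomorphism) completes the verification that $\{X_\alpha\}$ and $\{Z_\alpha\}$ are the required $\varepsilon$-subdivisions, with $Y_\alpha=h^{-1}(Z_\alpha)$ serving as the hull of $X_\alpha$.

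The main obstacle, I expect, is the gluing step in the construction of $h$: extending each $h_j$ locally is routine, but assembling these into a single homeomorphism of the non-simply-connected regions $\S^n\setminus\bigcup U_j$ and $\S^n\setminus\bigcup G_j$ requires controlling how the local pieces fit together across the ``connecting'' part of the region, and this is exactly the point where higher-dimensional topology (annulus theorem, collar neighbourhoods, the fact that the complementary pieces need not be standard balls) is delicate and must be done carefully. The perturbation argument for the subdivision is the second source of technical difficulty, but it is more of a routine general-position argument once the shrinking hypothesis $\diam(U_j)\to 0$ and compactness are exploited; the genuinely new content over the classical Whyburn--Cannon argument is that we are forbidden from simplifying the topology of the $U_i$, so every step must be performed by honest homeomorphisms of the complement rather than by collapsing cells, which is why the hulls $Y_\alpha$ need not themselves be $n$-cells on the source side even though the $Z_\alpha$ are cubes on the target side. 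I would organize the write-up so that the local extension, the gluing, and the perturbation are three clearly separated steps, citing Section \ref{section:facts} for the purely topological inputs.
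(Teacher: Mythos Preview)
Your plan misidentifies where the real difficulty lies. The extension of the boundary maps $h_j$ to a single homeomorphism between $\S^n\setminus\bigcup_{j=1}^N U_j$ and $\S^n\setminus\bigcup_{j=1}^N G_j$ is not the obstacle you make it out to be: it is handled in one stroke by Corollary~\ref{Extension:cor}, with no local-to-global gluing needed. The genuine issue is the step you dismiss as ``a routine general-position argument,'' namely perturbing the $(n-1)$-skeleton so that its $h$-preimage avoids \emph{all} of the sets $\overline{U_j}$, $j>N$. These are not countably many points but countably many compacta whose union is typically of full measure in the region (whenever $|X|=0$), so no translate of a Euclidean grid can work, and a naive Baire/wiggling argument on infinitely many faces, all of which must match along the lower skeleta, is far from routine. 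This is exactly the step where the paper invests its effort: Lemma~\ref{decomposition:lemma} first collapses each $\overline{U_j}$, $j>N$, to a single point $q_j$ via a map $p$ (which already requires Cannon's re-embedding Lemma~\ref{Cannon:lemma} because the $\overline{U_j}$ need not be flat cells), and only \emph{after} this collapse is the general-position step trivial---one merely has to keep the $(n-1)$-skeleton off the countable set $\{H(q_j)\}$. The pullbacks $(H\circ p)^{-1}(Z_\alpha)$ are then certified to be $n$-cells by the Cellular Approximation Theorem (Corollary~\ref{Approximation:cor}), and the homeomorphism $h$ is assembled only at the very end, by taking $H\circ p$ on the skeleton preimage $L$ and extending cellwise.

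A smaller point: your closing remark that ``the hulls $Y_\alpha$ need not themselves be $n$-cells on the source side'' is wrong. By definition an $\varepsilon$-subdivision of $X$ requires the hulls to be $n$-cubes; and since in your scheme $Y_\alpha=h^{-1}(Z_\alpha)$ with $h$ a homeomorphism and $Z_\alpha$ an $n$-cell, the hulls \emph{are} $n$-cells automatically. The delicate issue in the paper is rather that $(H\circ p)^{-1}(Z_\alpha)$ must be shown to be an $n$-cell even though $H\circ p$ is not injective---and that is precisely what the cellular-map machinery buys.
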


The proof of this lemma is given in the next section.

\begin{proof}[Proof of Theorem \ref{Theorem:main}]
Let $\varepsilon_k=1/k$,  and fix a component $U_{0}$ of $\S^n\setminus X$ that has the largest diameter.

First let  $\varepsilon = \varepsilon_1$, and $U_0, U_1,\ldots, U_N$ be  a collection of components in $\S^n\setminus X$ for which the  remaining components  have diameters less than $\varepsilon_1$.  Set $G_0\coloneqq U_0$, and let
$h_{1}$ be an embedding of  $\bigcup_{j=0}^N \overline{U_j}$ into $\S^n$, which is the identity map on $\br{U_0}$, and is a similarity on $\br {U_j}$ that shrinks and translates ${U_j}$  to a set  $G_j$ for each $j\in \{1,\dots,N\}$, so that the sets $\br{G_j}$ are disjoint subsets of $\S^n\setminus \br{U_0}$.
Then, by Lemma \ref{lemma:subdivisions}, there exist a global homeomorphic extension $h_1\colon \S^n\to \S^n$, an $\varepsilon_1$-subdivision $\{X_\alpha\}$ of $X$,  and an $\varepsilon_1$-subdivision $\{Z_\alpha\}$ of $Z\coloneqq \S^n \setminus \bigcup_{j=0}^N G_j$  so that cubes in $\{h_1^{-1}(Z_\alpha)\}$ are the hulls of the Sierpi\'nski spaces in
 $\{X_\alpha\}$, respectively.

In the second step, let $X_{\alpha_0}$ be a  Sierpi\'nski space in the $\varepsilon_1$-subdivision of $X$, and $Z_{\alpha_0}$ be the corresponding $n$-cell in the $\varepsilon_1$-subdivision of $Z$ under $h_1^{-1}$. Let $V_0$ be the complementary component of $X_{\alpha_0}$ that contains $U_0$, and fix a finite collection of components
 $V_0, V_1,\dots,V_{M}$  in $\S^n\setminus X_{\alpha_0}$ for which
  all remaining components have diameters less than
$\varepsilon_2$. We observe that $\S^n\setminus  V_0= h_1^{-1}(Z_{\alpha_0}) $.
Again, by Lemma \ref{lemma:subdivisions}, there exist  a homeomorphism $h_2$ from  $\S^n\setminus V_0$ onto $Z_{\alpha_0}$, and $\varepsilon_2$-subdivisions of   $X_{\alpha_0}$ and  $Z_{\alpha_0}$, respectively,  for which
\begin{enumerate}
\item $h_2$ agrees with
$h_1$ on $\partial V_0$, and shrinks and translates  $V_1, \ldots, V_M$ to sets  $D_1,\ldots, D_M$, having pairwise disjoint closures,  in the interior of $Z_{\alpha_0}$ by similarities,  and
\item  the cubes in the preimage, under $h_2$, of the $\varepsilon_2$-subdivision of $Z_{\alpha_0}$ are the hulls of the Sier\-pi\'nski  spaces in the $\varepsilon_2$-subdivision of $X_{\alpha_0}$.
\end{enumerate}
We repeat this for each Sierpi\'nski space $X_\alpha$ in the $\varepsilon_1$-subdivision  of $X$, and extend $h_2$ to a homeomorphism $\S^n \to \S^n$ which agrees with $h_1$ on $\bigcup_{j=0}^N \br{U_j}$.

Inductively, we obtain a sequence of homeomorphisms $h_k\colon \S^n\to \S^n$ for which $h_k$ is $\varepsilon_k$-close to $h_m$ uniformly for all $m\geq k$. The same statement holds for the inverses $h_k^{-1}$. In view of  the inductive construction, on each component of $\S^n\setminus X$, the sequence $\{h_k\}_{k\in \N}$ is eventually constant and, in fact, the maps are eventually conformal. Indeed, if $U$ is a component of $\S^n\setminus X$ with $\varepsilon_k\leq \diam(U)$, then $h_{k}|_U$ is a similarity and $h_m|_U= h_{k}|_U$ for all $m\geq k$.

We may conclude that  the sequence $h_k$ converges uniformly to a homeomorphism $h\colon \S^n\to \S^n$ which has  the following properties:
\begin{enumerate}[(a)]
\item $h$ is conformal in the complement of $X$,
\item $Y\coloneqq h(X)$ is a Sierpi\'nski space, and
\item $Y$ has positive $n$-measure.
\end{enumerate}
For the latter property one has to note that the image of $\S^n\setminus X$ under $h$ has smaller Lebesgue measure, since $h$ is a similarity on each component of $\S^n\setminus X$ and it shrinks all but one component. In fact, $h$ may be chosen so that the measure of  $Y$ is arbitrarily close to that  of $\S^n$.
 \end{proof}

\begin{proof}[Proof of Corollary \ref{Cor:nonremovability}]
Identify the Sier\-pi\'nski space $X$ with a compact subset of $\R^n\subset \R^n\cup\{\infty\}\approx \S^n$ by the stereographic projection, and let $U_0$ be the unbounded component of $\R^n\setminus X$.
Enumerate the remaining  complementary components of $X$ by $U_i$, $i\in \N$.

By the preceding proof, we may obtain a homeomorphism $h$ of $\R^n$ which is the identity on  $U_0$ and is,  for each $i\in \N$, a similarity on  $U_i$ that shrinks $U_i$ by any desired factor.
In particular, we may require that $|h(U_i)|\leq |U_i|^{i+1}$ for $i\in \N$, where $|\cdot |$ denotes Lebesgue measure in $\R^n$. Note that the Jacobian of $h$ on $U_i$ is a constant equal to $|h(U_i)|/|U_i|$.

If $h^{-1}$ were quasiconformal, then its Jacobian $J_{h^{-1}}$ would be in  $L^{1+\varepsilon}_{\loc}(\R^n)$ for some $\varepsilon>0$ depending on $h^{-1}$; see \cite[Theorem 1]{Gehring:Lpintegrability}.
On the other hand,

\begin{align*}
\int_{\bigcup_{i\in \N}h(U_i)} (J_{h^{-1}})^{1+\varepsilon}&= \sum_{i\in \N} \frac{|U_i|^{1+\varepsilon}}{|h(U_i)|^{1+\varepsilon}}|h(U_i)| \\
&= \sum_{i\in \N} |U_i|^{1+\varepsilon} |h(U_i)|^{-\varepsilon} \geq \sum_{i\in \N} |U_i|^{1-\varepsilon i},
\end{align*}
which diverges because $\diam(U_i)\to 0$. This contradiction proves that $h^{-1}$, and thus $h$, is not quasiconformal on $\R^n$.

Since $h|_{\R^n\setminus X}$ is (quasi)conformal in the complement  of $X$ and $h$ is its unique homeomorphic extension to $\R^n$, we conclude that the set $X$ is non-removable.
\end{proof}

\section{Proof of Lemma \ref{lemma:subdivisions}}\label{section:subdivisions}

Under the assumptions of Lemma \ref{lemma:subdivisions}, we first prove a decomposition result suitable for our setting (compare to the statement of Theorem \ref{Decomposition}):

\begin{lemma}\label{decomposition:lemma}
Under the assumptions of Lemma \ref{lemma:subdivisions}, there exists a continuous surjective map $p\colon \S^n\to \S^n$ which fixes $\bigcup_{i=1}^N \br{U_j}$, and induces a decomposition of $\S^n$ into  sets $\{ \br{U_j}\}_{j\geq N+1}$ and points. Specifically,  there exist countably many points $\{q_j\colon j\geq N+1\}$ for which $p^{-1}(q_j)=\br{U_j}$,  and the map $p \colon \S^n\setminus \bigcup_{j=N+1}^{\infty} \br{U_j}\to \S^n\setminus \{q_j\colon j\geq N+1\}$ is bijective.
\end{lemma}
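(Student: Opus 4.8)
The plan is to realize $p$ as (a normalization of) the quotient map of the upper semicontinuous decomposition $\mathcal G$ of $\S^n$ whose nondegenerate elements are exactly the closures $\br{U_j}$, $j\geq N+1$, every other element being a singleton. This is a genuine decomposition since the $\br{U_j}$ are pairwise disjoint by Definition \ref{def:Sierpinski_space}(2), and it is upper semicontinuous because $\{\br{U_j}\}_{j\geq N+1}$ is a null family: $\diam(\br{U_j})=\diam(U_j)\to 0$ by Definition \ref{def:Sierpinski_space}(4). Once we know that the decomposition space $\S^n/\mathcal G$ is again homeomorphic to $\S^n$, the lemma follows by composing the quotient map $\pi\colon\S^n\to\S^n/\mathcal G$ with a suitable homeomorphism $\S^n/\mathcal G\to\S^n$: the singletons lying in $\bigcup_{i=1}^N\br{U_i}$ provide the fixed set, the images of the sets $\br{U_j}$ provide the points $q_j$ with $p^{-1}(q_j)=\br{U_j}$, and the injectivity of $\pi$ off the nondegenerate part yields the asserted bijectivity of $p$ on $\S^n\setminus\bigcup_{j\geq N+1}\br{U_j}$.

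The only substantive point is thus to check that $\mathcal G$ satisfies the hypotheses of Theorem \ref{Decomposition}; concretely, that each $\br{U_j}$, $j\geq N+1$, is a \emph{cellular} subset of $\S^n$. This is where Definition \ref{def:Sierpinski_space}(1) is used, and it is the one place needing some care, since $\br{U_j}$ itself need not be an $n$-cell. Write $B_j\coloneqq\S^n\setminus U_j$, an $n$-cell by hypothesis, so that $\br{U_j}=\S^n\setminus\inter B_j$; fix a homeomorphism from $B_j$ onto the closed unit ball $\br{\D^n}$ and let $F_j^{(m)}\subset\inter B_j$ be the preimage of the closed ball of radius $1-1/m$. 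Each $\partial F_j^{(m)}$ is a bicollared $(n-1)$-sphere inside the open set $\inter B_j$, hence a bicollared sphere in $\S^n$, so by the generalized Schoenflies theorem the closure $E_j^{(m)}\coloneqq\S^n\setminus\inter F_j^{(m)}$ of the remaining component of $\S^n\setminus\partial F_j^{(m)}$ is an $n$-cell. Since $E_j^{(m+1)}\subset\inter E_j^{(m)}$ and $\bigcap_m E_j^{(m)}=\S^n\setminus\inter B_j=\br{U_j}$, the sequence $\{E_j^{(m)}\}_m$ is a defining sequence and $\br{U_j}$ is cellular.

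With cellularity in hand, Theorem \ref{Decomposition} gives $\S^n/\mathcal G\approx\S^n$ with $\pi$ a uniform limit of homeomorphisms (for $n=2$ this is classical, and there $\br{U_j}$ is in fact a disk). To recover the precise conclusion I would pick the approximating homeomorphism $h\colon\S^n\to\S^n/\mathcal G$ so that $h$ agrees with $\pi$ on $\bigcup_{i=1}^N\br{U_i}$ — which is legitimate because this compact set is disjoint from every nondegenerate element of $\mathcal G$, so the shrinking homeomorphisms used to construct $h$ may be taken to fix it pointwise — and then set $p\coloneqq h^{-1}\circ\pi$ and $q_j\coloneqq h^{-1}(\pi(\br{U_j}))$. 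A direct check then shows that $p$ fixes $\bigcup_{i=1}^N\br{U_i}$, satisfies $p^{-1}(q_j)=\pi^{-1}(\pi(\br{U_j}))=\br{U_j}$, and maps $\S^n\setminus\bigcup_{j\geq N+1}\br{U_j}$ bijectively onto $\S^n\setminus\{q_j:j\geq N+1\}$.

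The delicate point, as the above indicates, is not any single geometric step but the bookkeeping around the $N$ distinguished components: because small components of $\S^n\setminus X$ may accumulate on $\partial U_i$ for $i\leq N$, one cannot keep $p$ equal to the identity on a neighborhood of $\bigcup_{i=1}^N\br{U_i}$, only on the set itself, and the shrinking must be organized accordingly. If one prefers to avoid invoking Theorem \ref{Decomposition} in full, the same $p$ can be built by collapsing the $\br{U_j}$ one at a time: using cellularity, $\br{U_j}$ admits, inside any prescribed neighborhood, a continuous surjection of $\S^n$ onto itself collapsing $\br{U_j}$ to a point and equal to the identity outside that neighborhood; choosing the neighborhoods to be cells of diameter at most $2^{-j}$ disjoint from $\bigcup_{i=1}^N\br{U_i}$ and from the finitely many points previously created, one composes these maps and passes to the uniform limit, summability of the diameters ensuring convergence.
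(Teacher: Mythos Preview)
Your argument is essentially sound, but it follows a different route from the paper's and there is one citation mismatch worth flagging.

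The paper does \emph{not} collapse the sets $\br{U_j}$, $j\geq N+1$, directly. Instead it first invokes Cannon's Lemma~\ref{Cannon:lemma} to re-embed $X$ as a Sierpi\'nski space $X'$ whose complementary boundaries are \emph{flat} $(n-1)$-spheres, then uses the Extension Corollary~\ref{Extension:cor} to produce a homeomorphism $G$ of $X$ onto another Sierpi\'nski space $X''$ with $G|_{\bigcup_{j=1}^N\partial U_j}=\id$ and with each $\br{U_j''}$ a flat $n$-cell. Only then is Theorem~\ref{Decomposition} applied, to the flat cells $\br{U_j''}$, and the resulting collapse is pulled back through $G$ (and extended constantly over each $U_j$). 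The point of this detour is precisely that Theorem~\ref{Decomposition}, as stated in the paper, requires a null sequence of \emph{flat $n$-cells}, whereas $\br{U_j}$ need not even be a cell.

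Your approach bypasses Cannon's Lemma and the extension machinery by showing directly that each $\br{U_j}$ is cellular (your Schoenflies argument using the collar inside $\inter(\S^n\setminus U_j)$ is correct) and then collapsing the null cellular family. This is cleaner, but when you write ``check that $\mathcal G$ satisfies the hypotheses of Theorem~\ref{Decomposition}; concretely, that each $\br{U_j}$ is cellular,'' you are invoking a stronger statement than Theorem~\ref{Decomposition}: cellularity is not the stated hypothesis there. The stronger fact you need---that null sequences of cellular sets in $\S^n$ form a shrinkable decomposition fixing any prescribed closed set disjoint from the nondegenerate elements---is true and classical (and your final paragraph sketches an honest hands-on proof of it), but you should cite it as such rather than as Theorem~\ref{Decomposition}. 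What you gain is a shorter, more self-contained argument; what the paper's route gains is that it stays strictly within the toolkit assembled in Section~\ref{section:facts}.
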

\begin{proof}
Consider first an embedding $f\colon X \to \S^n$ such that  boundary components of the complement of $X'\coloneqq f(X)$ in $\S^n$ are flat $(n-1)$-spheres. Such a map exists by Lemma \ref{Cannon:lemma}. We remark that each sphere $f(\partial U_j)$ bounds a complementary component, denoted by $U_j'$, of $X'$, and that $X'=\S^n\setminus \bigcup_{j=1}^\infty U'_j$. For an explanation of this remark, see for example the argument in \cite[Lemma 5.5]{Bonk:uniformization}.

Consider the homeomorphisms $g_j= (f|_{\partial U_j})^{-1} \colon \partial U_j'\to \partial U_j$, $j=1,\dots,N$. By Corollary \ref{Extension:cor}, these maps may be extended to a homeomorphism $g\colon \S^n\setminus \bigcup_{j=1}^N U_j'\to \S^n\setminus \bigcup_{j=1}^N U_j$. Note that $g$ maps each $\br{U_j'}$ to a flat $n$-cell, for $j\geq N+1$.

Consider the homeomorphism $G=g\circ f$ on $X$. Since $G$  is the identity on $\bigcup_{j=1}^N \partial U_j$,  it may be extended to be the identity map on $\br \Omega$, where $\Omega\coloneqq \bigcup_{j=1}^N {U_j}$. Moreover, for $j\geq N+1$, $G$ maps $\partial U_j$ to a flat sphere that bounds a complementary component, denoted by $U_j''$, of $X''=G(X)$.

We apply the Decomposition Theorem \ref{Decomposition} to obtain a map $p''\colon \S^n\to \S^n$ that fixes $\br \Omega$, and collapses $\br {U_j''}$, $j\geq N+1$, to points. The  composition
\[p\coloneqq p''\circ G\colon X\cup \Omega \to \S^n\]
  is the identity on $\br\Omega$, and  collapses  $\partial U_j$ to a point $q_j$ satisfying $p^{-1}(q_j)=\partial U_j$ for  $j\geq N+1$. We now extend $p$ to $\bigcup_{j=N+1}^\infty {U_j}$ so that $p^{-1}(q_j)=\br{U_j}$. Since $\diam(U_j)\to 0$, the extension $p \colon \S^n \to\S^n$ is continuous, and hence is the map claimed in the lemma.
\end{proof}

\begin{proof}[Proof of Lemma \ref{lemma:subdivisions}]
Let $p\colon \S^n \to\S^n$ be the map in Lemma \ref{decomposition:lemma}. By  Corollary \ref{Extension:cor}, the homeomorphisms
\[h_j\circ (p|_{\partial{U_j}})^{-1}\colon p(\partial{U_j}) \to \partial{G_j}, \quad 1\leq j\leq N ,  \]
on  subsets of $ \S^n$, can be extended to a homeomorphism
\[H\colon p(\S^n) \setminus \bigcup_{j=1}^{N} p(U_j) \to \S^n\setminus \bigcup_{j=1}^{N} G_j.\]
Fix a number $\delta \in (0,\varepsilon)$, for which all sets  of diameter $\delta$ are mapped by   $p^{-1}\circ H^{-1}$  to sets of diameters less than $\varepsilon$. The existence of such a $\delta$ is a consequence of the uniform continuity of $H^{-1},p$ and the fact that the preimages of points under $p$ have diameter smaller than $\varepsilon$; recall from the statement of Lemma \ref{lemma:subdivisions} that the sets $\br{U_j}$, $j\geq N+1$, that are collapsed to points have diameters smaller than $\varepsilon$.

Fix next a finite cubical complex $K$ on $\S^n\setminus \bigcup_{j=1}^N G_j$ whose $n$-cubes have diameters less than $\delta$, and whose  $(n-1)$-skeleton $K^{[n-1]}$ does not meet the countable set $A\coloneqq  \{H (q_j)\colon j\geq N+1\}$. The cubical complex $K$ may be found by identifying $\S^n$ with $\R^n \cup\{\infty\}$ and $\{G_j \colon 1\leq j\leq N\}$ with Euclidean cubes having edges parallel to coordinate axes, with the help of Corollary \ref{Extension:cor}. Then, the family of $n$-cells $\{C\colon C\,\,\text{is an}\,\,n\text{-cube in}\,\,K\}$ is a $\delta$-subdivision of $Z\coloneqq \S^n \setminus \bigcup_{j=1}^N G_j$.

Observe that $H\circ p\colon \S^n \setminus \bigcup_{j=1}^N U_j \to \S^n \setminus \bigcup_{j=1}^N G_j$ is a cellular map between two compact manifolds \emph{with boundary}.
For the purpose of applying the Approximation Theorem  (Corollary \ref{Approximation:cor}),
we attach, on the domain side, an $n$-cell $C_j$  to $\S^n \setminus \bigcup_{j=1}^N U_j $ along $\partial U_j$ for every $j=1,\ldots, N$, to obtain an expanded space $\widetilde{M}$ which is homeomorphic to $\S^n$, and we do the same on the target side to obtain an expanded space $\widetilde{N}$ homeomorphic to $\S^n$. We extend $H\circ p$ to a map $\widetilde{M}\to \widetilde{N}$, which is  a homeomorphism between every pair of added $n$-cells.
We now apply Corollary \ref{Approximation:cor} to conclude that
$\{(H\circ p)^{-1}(C)\colon C\,\,\text{is an}\,\,n\-\text{-cube in}\,\,K\}$ are $n$-cells, and that they form an $\varepsilon$-subdivision of $Y\coloneqq \S^n \setminus \bigcup_{j=1}^N U_j$.
Thus,
$\{X\cap (H\circ p)^{-1}(C)\colon C\,\,\text{is an}\,\,n\text{-cube in}\,\,K\}$ is an $\varepsilon$-subdivision of $X$.

Observe that $(H\circ p)^{-1}$ is injective on the space $|K^{[n-1]}|$ of the $(n-1)$-skeleton $K^{[n-1]}$, since the latter does not meet the set $A$. Set $L= (H\circ p)^{-1} (|K^{[n-1]}|)$.
The homeomorphism $h$ in Lemma \ref{lemma:subdivisions} may be obtained by first setting
 \[h|_L= H \circ p|_L,\]
and then extending $h|_L$ to the interior of the $n$-cells in the $\varepsilon$-subdivision of $Y$ homeomorphically. This completes the proof of Lemma \ref{lemma:subdivisions}.
\end{proof}

\medskip

\section{Topological Facts}\label{section:facts}

We record some topological facts that  are needed for the proof of Lemma \ref{lemma:subdivisions}. We state them  and provide references for $n\geq 3$, but all these statements are also true for $n=2$. We refer the reader to \cite{Daverman-Venema} for the definitions of the various topological notions appearing below.

\begin{theorem}[Decomposition Theorem, \cite{Moore:theorem}, \cite{Meyer_DV}, {\cite[II.8.6A]{Daverman:decompositions}}]\label{Decomposition}
Let $n\geq 2$, $\{B_i\}_{i\in \N}$ be a null sequence of disjoint flat $n$-cells in $\S^n$, and $U$ be an open set containing $\bigcup_{i\in \N} B_i$. Then there exists a continuous surjective map $f\colon \S^n\to \S^n$, which is the identity outside $U$, such that $f$ induces a decomposition of $\S^n$ into the sets $\{B_i\}_{i\in \N}$ and points. Specifically,  there exist countably many points $p_i\in U$, $i\in \N$, such that $f^{-1}(p_i)=B_i$ for each $i\in \N$, and the map $f\colon  \S^n\setminus \bigcup_{i\in \N} B_i \to \S^n\setminus \{p_i\colon i\in \N\}$ is bijective.
\end{theorem}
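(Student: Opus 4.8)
The plan is to produce $f$ as a normalization of the quotient map of the decomposition
\[
G=\{B_i:i\in\N\}\cup\{\{x\}:x\in\S^n\setminus\bigcup_{i}B_i\}.
\]
First I would check that $G$ is upper semicontinuous. The one delicate point is continuity at a singleton $\{x\}$ when $x$ is a limit point of $\bigcup_i B_i$: choosing a small ball $B(x,\delta)$ inside a prescribed neighborhood of $x$, every $B_i$ with $\diam B_i<\delta/2$ that meets $B(x,\delta/2)$ is contained in $B(x,\delta)$, and since $\diam B_i\to 0$ only finitely many $B_i$ have larger diameter, each of which (not containing $x$) lies at positive distance from $x$; shrinking $B(x,\delta/2)$ to avoid these finitely many cells completes the verification. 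Hence $\S^n/G$ is a compact metric space, the projection $\pi\colon\S^n\to\S^n/G$ is a closed continuous surjection that is injective off $\bigcup_i B_i$, and both $U$ and $\S^n\setminus U$ are $\pi$-saturated because every $B_i\subset U$. It therefore suffices to prove that $G$ is \emph{shrinkable} by homeomorphisms supported in $U$, that is, that for each $\varepsilon>0$ there is a homeomorphism $h$ of $\S^n$ equal to the identity off $U$ with $\diam h(B_i)<\varepsilon$ for every $i$ and with $\pi\circ h$ within $\varepsilon$ of $\pi$ in the metric of $\S^n/G$. By Bing's shrinkability criterion this exhibits $\pi$ as a uniform limit of homeomorphisms $\S^n\to\S^n$ that are the identity off $U$, so $\S^n/G$ is homeomorphic to $\S^n$ via a homeomorphism that is the identity on $(\S^n/G)\setminus\pi(U)$, which we identify with $\S^n\setminus U$ via $\pi$. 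Composing yields $f$; the collapse points $p_i\coloneqq f(B_i)$ lie in $U$, for otherwise $f$ fixes $p_i$ and then $p_i\in f^{-1}(p_i)=B_i\subset U$, a contradiction; and $f$ restricts to a bijection $\S^n\setminus\bigcup_i B_i\to\S^n\setminus\{p_i:i\in\N\}$ by construction of $G$.

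The heart of the matter is the shrinkability, and here flatness is used twice. On the one hand, each $B_i$ is cellular in $\S^n$: conjugating $\overline{\mathbb B^n}=\bigcap_k B(0,1+1/k)$ by a self-homeomorphism $\Psi_i$ of $\S^n$ with $\Psi_i(B_i)=\overline{\mathbb B^n}$ writes $B_i$ as a nested intersection of open $n$-cells. On the other hand, and more importantly, flatness lets one reposition $B_i$ freely: given an open neighborhood $W$ of $B_i$ and $\eta>0$, the open set $\Psi_i(W)$ contains some ball $B(0,1+\rho)$; a radial squeeze of $B(0,1+\rho)$ fixing its boundary shrinks $\overline{\mathbb B^n}$ into a tiny ball, and a further isotopy along a path slides that ball anywhere within a prescribed connected open subset of $\Psi_i(W)$, so conjugating back by $\Psi_i^{-1}$ gives a homeomorphism of $\S^n$, supported in $W$, carrying $B_i$ into a set of diameter $<\eta$ placed anywhere we like inside $W$ (the bound $\eta$ being adjusted using uniform continuity of $\Psi_i^{-1}$). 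To verify the criterion for a given $\varepsilon>0$, note that only finitely many $B_i$ have diameter $\ge\varepsilon$; these are pairwise disjoint compacta, so they admit pairwise disjoint open neighborhoods contained in $U$ and thin enough that their $\pi$-images have diameter $<\varepsilon$ (possible since $\pi$ collapses the enclosed cell to a point). Repositioning each such cell inside its neighborhood to have diameter $<\varepsilon$ produces a homeomorphism $h_1$; it may drag some previously small cell into a large set, but by uniform continuity only finitely many cells are enlarged past $\varepsilon$, so one repeats, and since $\{B_i\}$ is a null sequence this process uses only finitely many generations. The final composite is the identity off $U$ and satisfies the criterion. For $n=2$ one may instead invoke Moore's theorem directly, a flat $2$-cell being a non-separating subcontinuum of $\S^2$; in all dimensions the required decomposition machinery is in \cite{Moore:theorem}, \cite{Meyer_DV}, \cite[II.8.6A]{Daverman:decompositions}.

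The step I expect to be the main obstacle is the last one: organizing the repositioning into a single homeomorphism that meets Bing's criterion relative to $U$. Because the cells $B_i$ may cluster onto the \emph{boundaries} of other cells (though never onto their interiors, by disjointness), one cannot fix disjoint neighborhoods of all cells in advance, so the shrinking must be carried out generation by generation, with the displacement created in the quotient tracked against $\varepsilon$ at each stage; the null-sequence hypothesis is precisely what guarantees that only finitely many cells are relevant at any given scale and hence that the procedure terminates. All of this is routine in decomposition theory, so in the write-up I would either spell out the bookkeeping explicitly or, more economically, appeal to \cite[II.8.6A]{Daverman:decompositions}.
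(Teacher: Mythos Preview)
The paper does not supply a proof of this statement: Theorem~\ref{Decomposition} is listed in Section~\ref{section:facts} (``Topological Facts'') purely as a cited result, with references to Moore, Meyer, and Daverman, and is used as a black box in the proof of Lemma~\ref{decomposition:lemma}. So there is no ``paper's own proof'' to compare against.

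That said, your sketch is the standard route to such results and is essentially what one finds in the cited sources: verify that the decomposition $G$ is upper semicontinuous, then show it is shrinkable (fixing $\S^n\setminus U$) and invoke Bing's criterion to realize the quotient map as a near-homeomorphism. Your use of flatness to get cellularity and free repositioning of each $B_i$ is correct. The one point that is genuinely not yet a proof is the termination of the ``generation-by-generation'' shrinking: you assert that the process ``uses only finitely many generations,'' but as written nothing prevents each stage from enlarging a fresh finite batch of previously small cells, so finiteness at every stage does not by itself give termination. You already flag this as the main obstacle, and your proposed remedy---either do the bookkeeping explicitly or appeal to \cite[II.8.6A]{Daverman:decompositions}---is exactly right; the clean arguments in the literature avoid the naive iteration by choosing the shrinking neighborhoods and motions more carefully in a single pass (e.g.\ arranging that each shrinking map is supported in a cell whose boundary misses all but controllably small $B_j$, so that no small cell is stretched past the target scale). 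If you intend this as a self-contained proof rather than an outline, that step needs to be written out.
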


\begin{lemma}[{\cite[Lemma 0]{Cannon:Sierpinski}}]\label{Cannon:lemma}
Let $n\geq 3$, and $X$ be an $(n-1)$-dimensional Sierpi\'nski space in $\S^n$. Then there exists an embedding $h\colon X\to \S^n$ such that the boundary components of $\S^n\setminus h(X)$ are flat $(n-1)$-spheres.
\end{lemma}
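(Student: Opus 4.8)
The final statement to prove is Lemma~\ref{Cannon:lemma}: given an $(n-1)$-dimensional Sierpi\'nski space $X$ in $\S^n$ (with $n\ge 3$), produce an embedding $h\colon X\to\S^n$ so that the boundary components of the complement of $h(X)$ are flat $(n-1)$-spheres.

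The plan is to realize the embedding one complementary component at a time, taming each boundary sphere and pushing the correction into a small neighborhood so that only finitely many components are moved appreciably at any stage. First I would recall that each $\partial U_i$ is an $(n-1)$-sphere which is bicollared in $\S^n$ (since each side $\overline{U_i}$ and $\S^n\setminus U_i$ is an $n$-cell, by Definition~\ref{def:Sierpinski_space}(1)), hence locally flat; by the generalized Sch\"onflies theorem together with the Annulus Theorem (Theorem~\ref{Annulus:thm}) each $\partial U_i$ is, by itself, a flat sphere — the content of the lemma is to flatten all of them simultaneously by a single embedding of $X$. Enumerate the $U_i$ so that $\diam(U_i)\to 0$. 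Inductively build homeomorphisms $h_k\colon\S^n\to\S^n$: having flattened $\partial U_1,\dots,\partial U_{k-1}$, pick $h_k$ supported in a small neighborhood $V_k$ of $\overline{U_k}$ that meets only finitely many $\overline{U_j}$ (possible because the family is a null sequence with disjoint closures, Definition~\ref{def:Sierpinski_space}(2),(4)), with $h_k$ a homeomorphism of $\S^n$ carrying $\partial U_k$ onto a round sphere while not disturbing the already-flattened ones; shrink $V_k$ further if necessary so that the composition $h_k\circ\cdots\circ h_1$ moves points by less than $2^{-k}$. Then the compositions converge uniformly, and so do their inverses, to a homeomorphism $\Phi\colon\S^n\to\S^n$; set $h=\Phi|_X$. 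By construction $\Phi(\partial U_i)$ is a flat sphere for every $i$ (it is eventually fixed once $\diam(U_i)$ drops below the relevant scale), and a limiting argument plus Definition~\ref{def:Sierpinski_space}(3),(4) shows $\Phi(X)=\S^n\setminus\bigcup_i\Phi(U_i)$ with the $\Phi(\partial U_i)$ exactly the boundary components of the complement.

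The single-component flattening step is where the real work lies. To flatten $\partial U_k$ while keeping control: choose a bicollar $\partial U_k\times[-1,1]\hookrightarrow\S^n$ contained in $V_k$; by the Sch\"onflies theorem the closed complementary $n$-cell $\S^n\setminus U_k$ is topologically standard, so there is a homeomorphism of a neighborhood taking $\partial U_k$ to the boundary of a tiny round ball $B_k$; I would then modify this homeomorphism outside $B_k\cup V_k$ by the identity using the bicollar to interpolate, and invoke the fact (Corollary~\ref{Extension:cor}, or directly the annulus/collar structure) that a homeomorphism defined on a collar boundary extends over the collar. The disjointness of closures and the null-sequence property guarantee $V_k$ can be taken so small that the $\overline{U_j}$, $j<k$, already flattened, lie outside $V_k$ and are untouched, and that only finitely many later $\overline{U_j}$ meet $V_k$ — those are carried along homeomorphically, which is harmless since we only claim flatness in the limit, and each individual $\partial U_j$ is flattened at its own stage $j$ and then frozen.

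The main obstacle I anticipate is ensuring the limit map $\Phi$ is a homeomorphism of all of $\S^n$ and not merely a continuous surjection — i.e.\ controlling the inverses. This is handled by the standard device of also requiring at stage $k$ that $h_k$ is close enough to the identity (after composing with the previous stages) in the sense that the inverse compositions form a uniform Cauchy sequence; concretely one arranges $\operatorname{dist}(h_k^{-1}\circ\cdots,\ (\text{previous inverse}))<2^{-k}$ by shrinking $V_k$, using uniform continuity. A secondary point is verifying that the image $\Phi(X)$ is again a Sierpi\'nski space with the stated boundary behavior; this follows because $\Phi$ is a homeomorphism of $\S^n$, Definition~\ref{def:Sierpinski_space} is topologically invariant, and flatness of each $\Phi(\partial U_i)$ is achieved at a finite stage and preserved thereafter. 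With these in hand, $h=\Phi|_X$ is the required embedding.
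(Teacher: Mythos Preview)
There is a genuine gap at the very first step. You assert that $\partial U_i$ is bicollared ``since each side $\overline{U_i}$ and $\S^n\setminus U_i$ is an $n$-cell, by Definition~\ref{def:Sierpinski_space}(1)''. But Definition~\ref{def:Sierpinski_space}(1) says only that $\S^n\setminus U_i$ is an $n$-cell; it says nothing about $\overline{U_i}$. In dimensions $n\ge 3$ this matters: take $\partial U_i$ to be an Alexander horned sphere in $\S^3$, with $U_i$ the wild complementary domain. Then $\S^3\setminus U_i$ is a $3$-cell, so condition~(1) is satisfied, yet $\overline{U_i}$ is not a $3$-cell and $\partial U_i$ is not bicollared, hence not flat. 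The paper itself stresses this immediately after Definition~\ref{def:Sierpinski_space}: ``the boundary components $\partial U_i$ \dots\ are not assumed to be flat spheres''.

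This is fatal for your strategy, not merely a missing justification. You aim to build a homeomorphism $\Phi\colon\S^n\to\S^n$ with each $\Phi(\partial U_i)$ flat. Flatness of an embedded sphere is an ambient invariant, so if some $\partial U_i$ is wild no such $\Phi$ exists at all. The lemma only promises an embedding of $X$, not a homeomorphism of $\S^n$, and that distinction is the whole point. Cannon's argument (alluded to in the paper after Corollary~\ref{Extension:cor}) uses only the one-sided collar available inside the $n$-cell $\S^n\setminus U_i$: one pushes $X$ slightly \emph{into} $\S^n\setminus U_i$ along that collar, thereby enlarging $U_i$ and replacing the possibly wild boundary $\partial U_i$ by a nearby level sphere $\partial U_i\times\{t\}$, which \emph{is} bicollared (by the collar itself) and hence flat by the generalized Sch\"onflies theorem. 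Carrying this out for all $i$ with collars of shrinking size---using the null-sequence and disjoint-closures conditions exactly as in your convergence scheme---yields the required embedding $h\colon X\hookrightarrow\S^n$; in general it does not extend to a homeomorphism of the ambient sphere.
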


\begin{theorem}[Annulus Theorem, \cite{Moise:Affine}, \cite{Kirby:Annulus}, \cite{Quinn:EndsIII}]\label{Annulus:thm}
Let $n\geq 3$, and $D_1,D_2\subset \S^n$ be disjoint flat $n$-cells. Then $\S^n\setminus (D_1\cup D_2)$ is homeomorphic to $\S^{n-1}\times[0,1]$.
\end{theorem}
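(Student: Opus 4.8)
The plan is to prove the Annulus Theorem along the classical route: reduce the statement about flat $n$-cells to the \emph{Stable Homeomorphism Conjecture} $\mathrm{SHC}_n$ for $\R^n$, and then attack $\mathrm{SHC}_n$ dimension by dimension.

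First I would pass from $n$-cells to homeomorphisms of Euclidean space. By the generalized Schoenflies theorem of Brown and Mazur (elementary and valid in every dimension), the closure of the complement of a flat $n$-cell in $\S^n$ is again a flat $n$-cell; applying this to $D_1$ we may assume $\S^n\setminus\inter(D_1)$ is the standard closed ball $\br{\mathbb D^n}$ and that $D_2$ is a flat $n$-cell in its interior. After stereographic projection the claim becomes: given a flat $n$-cell $D\subset\R^n$, the region bounded by $\partial D$ and a large round sphere surrounding it is homeomorphic to $\S^{n-1}\times[0,1]$. By the theorem of Brown and Gluck, this annulus property in dimension $n$ is equivalent to $\mathrm{SHC}_n$: every orientation-preserving homeomorphism of $\R^n$ is a finite composition of homeomorphisms each of which restricts to the identity on some nonempty open set. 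The intuition behind the equivalence is that a locally flat sphere is bicollared, so the region between two such spheres is a product precisely when one can be pushed onto the other by an ambient homeomorphism, and the obstruction to that is carried by non-stable homeomorphisms.

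Next I would prove $\mathrm{SHC}_n$. For $n\geq 5$ this is Kirby's torus trick: starting from an orientation-preserving homeomorphism $h$ of $\R^n$ which we may assume is the identity off a compact set, use an immersion of the punctured torus $T^n\setminus\{\mathrm{pt}\}$ into $\R^n$ (which exists by Hirsch--Phillips immersion theory, the punctured torus being open and parallelizable) to transport $h$ to a homeomorphism $\bar h$ of $T^n$ homotopic to the identity; pass to a large finite cover to make the lifted homeomorphism close to the identity; then invoke smoothing theory --- the product structure theorem and the concordance-implies-isotopy results of Kirby--Siebenmann together with the surgery-theoretic computation of the structure set of $T^n$ (Hsiang--Shaneson, Wall, using $\pi_i(\mathrm{Top}/\mathrm{PL})=\Z/2$ for $i=3$ and $0$ otherwise) --- to conclude the lifted homeomorphism is isotopic to a $\mathrm{PL}$, hence smooth, homeomorphism. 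Diffeomorphisms isotopic to the identity are visibly stable, so unwrapping the cover shows $h$ agrees with a stable homeomorphism near a point, and a standard patching argument upgrades this to stability of $h$. For $n=4$ one runs the same torus trick but replaces the smoothing input, which is unavailable, by Freedman's topological handle theory and the disk embedding theorem; this is Quinn's argument in \emph{Ends of Maps III}. For $n=3$ one instead appeals to Moise's theorem that every $3$-manifold carries an essentially unique $\mathrm{PL}$ structure, reducing the annulus property to its $\mathrm{PL}$ counterpart, where it follows from the $\mathrm{PL}$ Schoenflies theorem and an elementary collar/handle argument.

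The main obstacle --- and the reason the theorem required three separate efforts --- is the case $n=4$: the torus trick reduces $\mathrm{SHC}_4$ to straightening topological handles on a $4$-manifold, but the smoothing machinery that settles $n\geq 5$ fails in dimension $4$, so one genuinely needs the full strength of Freedman's disk embedding theorem. For $n\geq 5$ the hard input is instead the non-simply-connected surgery computation of the structure set of the torus, which is exactly what makes the lifted near-identity homeomorphism straightenable; for $n=3$ the hard input is Moise's triangulation theorem.
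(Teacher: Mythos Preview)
The paper does not prove this theorem at all: it is stated in Section~4 (``Topological Facts'') purely as a cited result, with references to Moise (dimension~$3$), Kirby (dimensions~$\geq 5$), and Quinn (dimension~$4$), and is then used as a black box in the proof of Proposition~\ref{Extension:prop}. So there is no ``paper's own proof'' to compare against.

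Your outline is, in broad strokes, an accurate summary of what those three cited papers actually do: the Brown--Gluck reduction of the annulus property to the Stable Homeomorphism Conjecture, Kirby's torus trick combined with Kirby--Siebenmann smoothing and the surgery computation of the structure set of $T^n$ for $n\geq 5$, Quinn's replacement of the smoothing input by Freedman's disk embedding theorem for $n=4$, and Moise's triangulation theorem for $n=3$. As a high-level roadmap this is correct and well organized. Just be aware that each of these ingredients --- particularly the surgery computation for $T^n$, Freedman's disk theorem, and Moise's theorem --- is itself a substantial body of work, so what you have written is a proof \emph{plan} rather than a proof; filling in any one of the three cases in detail is a paper-length undertaking, which is precisely why the present paper simply cites the result.
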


The following is a consequence of  the Annulus Theorem; see Kirby \cite{Kirby:Annulus}.

\begin{theorem}[Isotopy Theorem]\label{Isotopy:thm}
Let $n\geq 3$, and $f\colon \S^n\to \S^n$ be an orientation-preserving homeomorphism. Then $f$ is isotopic to the identity.
\end{theorem}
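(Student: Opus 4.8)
The plan is to deduce the Isotopy Theorem from the Annulus Theorem (Theorem~\ref{Annulus:thm}) via the Stable Homeomorphism Theorem. Recall that a homeomorphism of a manifold is \emph{stable} if it can be written as a finite composition of homeomorphisms each of which restricts to the identity on some nonempty open set. By the Brown--Gluck equivalence of the Stable Homeomorphism Conjecture with the Annulus Conjecture (together with a standard reduction from $\R^n$ to $\S^n$, and Quinn's argument \cite{Quinn:EndsIII} in dimension $4$), the Annulus Theorem implies that every orientation-preserving homeomorphism $f$ of $\S^n$ is stable; this is essentially the logical route of Kirby's paper \cite{Kirby:Annulus}. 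Since a concatenation of isotopies is an isotopy, it therefore suffices to prove that a single homeomorphism $h\colon\S^n\to\S^n$ which is the identity on a nonempty open set $V$ is isotopic to $\id_{\S^n}$.

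To see this, choose a small closed round ball $D\subset V$. Then $h$ fixes $D$ pointwise, hence carries the complementary closed set $D'\coloneqq\S^n\setminus\inter D$ --- a closed $n$-cell --- onto itself and fixes $\partial D'=\partial D$ pointwise. Choosing a homeomorphism $D'\cong\mathbb{D}^n$ and applying the Alexander trick --- the radial coning $h_t(x)=t\,h(x/t)$ for $|x|\le t$ and $h_t(x)=x$ for $|x|\ge t$, with $h_0=\id$ --- we obtain an isotopy from $h|_{D'}$ to $\id_{D'}$ fixing $\partial D'$ at all times; extending it by the identity over $D$ yields an isotopy of $\S^n$ from $h$ to $\id_{\S^n}$. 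Concatenating the isotopies produced this way for the finitely many stable factors of $f$ completes the argument.

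The hard part is the stability step. The endgame above --- the Alexander trick, the identification of $D'$ as a cell, the concatenation of isotopies --- is soft and uniform in the dimension, and for $n\le 3$ the whole statement is classical; but the implication ``orientation-preserving $\Rightarrow$ stable'' in dimensions $n\ge 4$ is deep geometric topology, being exactly the content of Kirby's torus trick (and Quinn's work for $n=4$), and it is precisely here that the full strength of the Annulus Theorem, which we are entitled to assume, enters. If one wished to avoid invoking the Stable Homeomorphism Theorem as a black box, the alternative is to isotope $f$ directly to a homeomorphism that is the identity near a point: first use the homogeneity of $\S^n$ to arrange $f(p)=p$; then, noting that the complement of a flat $n$-cell is again a flat $n$-cell, use the Annulus Theorem (in the form ``two nested flat $n$-cells cobound an annulus''), the Alexander trick, and induction on $n$ (with the classical low-dimensional cases as base) to straighten $f$ to the identity on a flat ball around $p$ --- after which the second paragraph applies. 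But the work involved in this route is of the same depth.
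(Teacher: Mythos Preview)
The paper does not actually prove this theorem; it merely records it as a known consequence of the Annulus Theorem and refers the reader to Kirby \cite{Kirby:Annulus}. Your sketch is correct and is precisely the standard route that Kirby's paper establishes: the Annulus Theorem implies (via Brown--Gluck) the Stable Homeomorphism Theorem, and each factor of a stable homeomorphism is isotoped to the identity by the Alexander trick. So there is nothing to compare beyond noting that you have supplied the argument the paper chose to outsource.
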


The following extension property follows from the Annulus Theorem almost immediately. We state it here for completeness.

\begin{prop}[Extension]\label{Extension:prop}
Let $n\geq 3$,  $U_1,\dots,U_N\subset \S^n$ be open sets with pairwise disjoint closures and whose boundaries are flat $(n-1)$-spheres, and let $ U'_1,\dots,U'_N\subset \S^n$ be another collection of open sets with the same properties.
Let $h_i\colon \partial U_i \to \partial U_i'$, $i\in \{1,\dots,N\}$, be orientation-preserving homeomorphisms.  Then there exists a homeomorphism
\[h\colon \S^n  \to \S^n   \] which extends $h_i$ for $i\in \{1,\dots,N\}$.
\end{prop}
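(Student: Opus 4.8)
\textbf{Proof proposal for Proposition \ref{Extension:prop}.}

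The plan is to reduce the extension problem to two applications of the Annulus Theorem: one to identify the complement of the domain cells with a disjoint union of collars, and one to do the same on the target side. First I would note that it suffices to handle the case $N=1$ when the one complementary region of $\S^n\setminus \overline{U_1}$ is connected; the general case follows by working componentwise, since the closures $\overline{U_i}$ are pairwise disjoint and hence each component of the complement of $\bigcup_i \overline{U_i}$ has boundary a disjoint union of some of the flat spheres $\partial U_i$. Actually, a cleaner route is to treat the $N$ cells simultaneously: the set $W \coloneqq \S^n \setminus \bigcup_{i=1}^N \overline{U_i}$ is an open manifold whose frontier consists of the $N$ flat $(n-1)$-spheres $\partial U_i$, and likewise $W' \coloneqq \S^n\setminus\bigcup_{i=1}^N\overline{U_i'}$.

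The key step is the following. Each $\overline{U_i}$ is a flat $n$-cell, so pick an auxiliary small flat $n$-cell $B_i$ in the interior of $U_i$; then $\overline{U_i}\setminus \inter(B_i)$ is, by the Annulus Theorem, homeomorphic to $\S^{n-1}\times[0,1]$, with $\partial U_i$ corresponding to $\S^{n-1}\times\{0\}$. Gluing these collars to $W$ along the $\partial U_i$ we see that $\S^n$ minus the $N$ open cells $B_i$ is, again by the Annulus Theorem applied around each $B_i$ together with a collar structure, homeomorphic in a way that exhibits neighborhoods of each $\partial U_i$ as product collars $\partial U_i\times[0,1)$. The same construction on the target side produces product collars of each $\partial U_i'$. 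Now extend $h_i\colon \partial U_i\to\partial U_i'$ across the collars by the product map $h_i\times\id_{[0,1)}$; this gives a homeomorphism from a neighborhood of $\bigcup_i\partial U_i$ in $\overline{W}$ onto a neighborhood of $\bigcup_i\partial U_i'$ in $\overline{W'}$ that agrees with the given $h_i$ on the spheres. It remains to extend this to a homeomorphism $\overline{W}\to\overline{W'}$ and, separately, to extend each $h_i$ across $\overline{U_i}\to\overline{U_i'}$ (a homeomorphism between two flat $n$-cells restricting to a prescribed boundary map, which exists by coning once we know the boundary map extends over one cell — here is where the orientation hypothesis enters, guaranteeing via the Isotopy Theorem \ref{Isotopy:thm} that the boundary homeomorphism of $\partial U_i\cong\S^{n-1}$ is isotopic to the one induced by a radial extension).

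The main obstacle is the extension of the collared boundary homeomorphism over the ``interior'' piece $\overline{W}\to\overline{W'}$ and over the cells $\overline{U_i}\to\overline{U_i'}$: a priori a homeomorphism of $\S^{n-1}$ need not extend to a homeomorphism of the $n$-ball in the topological category in a way compatible with a prescribed exterior, and this is exactly the subtlety the Annulus Theorem is designed to overcome. Concretely, I would argue as follows. Using Corollary \ref{Extension:cor} — wait, that is downstream; instead I use only what is available: compose the collar map with a homeomorphism of $\S^n$ supported near one sphere to reduce to the case where $h_i$ is the identity on each $\partial U_i=\partial U_i'$ after pre- and post-composing by radial maps; the orientation-preserving hypothesis plus Theorem \ref{Isotopy:thm} makes this reduction legitimate. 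Once the boundary maps are the identity, the two collared pieces and the cells can be patched by the identity on collars and an arbitrary homeomorphism of the remaining closed cells (which are flat), giving the desired global $h$. I expect the write-up to spend most of its effort verifying that the product-collar structures exist and match up, and that the orientation bookkeeping is consistent across all $N$ spheres simultaneously.
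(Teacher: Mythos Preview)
Your proposal has a genuine gap in the step where you try to extend from the collars to all of $\overline{W}\to\overline{W'}$ when $N\geq 2$. The collars you build (via the Annulus Theorem applied inside each $\overline{U_i}$, or simply via flatness) only give you a homeomorphism defined on a neighborhood of $\bigcup_i\partial U_i$ in $\overline{W}$; the set $\overline{W}=\S^n\setminus\bigcup_i U_i$ is a sphere with $N$ holes, not a cell, and nothing you have written explains how to fill in the map over its interior. Your suggested reduction ``to the case where $h_i$ is the identity on each $\partial U_i=\partial U_i'$'' presupposes a homeomorphism of $\S^n$ carrying the configuration $\{U_i\}$ onto the configuration $\{U_i'\}$, which for $N\geq 2$ is essentially the content of the proposition itself (flatness is a one-sphere-at-a-time statement). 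Likewise, the initial ``componentwise'' reduction does not help: the complement $\S^n\setminus\bigcup_i\overline{U_i}$ is connected.

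What the paper does, and what is missing from your outline, is the crucial application of the Annulus Theorem to the \emph{two-holed sphere}: for $N=2$, the set $\S^n\setminus(U_1\cup U_2)$ is itself homeomorphic to $\S^{n-1}\times[0,1]$, and the Isotopy Theorem then lets you interpolate between the two prescribed boundary maps across this annulus. This is the step that actually produces a homeomorphism of the full complement, not just of collars. The general $N$ is then handled by induction: extend $h_1,\dots,h_{N-1}$ by the inductive hypothesis to get $g$, find a flat $n$-cell $D$ containing both $\overline{U_N'}$ and $g(\overline{U_N})$ and disjoint from the other $\overline{U_i'}$, and apply the $N=2$ case inside $D$ to correct $g$ on the last sphere. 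Your use of the Annulus Theorem (to produce collars) is the easy direction; the substantive use (identifying the two-holed sphere with a product) is absent.
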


\begin{proof}
We prove by induction on the number $N$ of open sets in each collection. The statement is true for $N=1$, by the flatness.

Suppose $N=2$. By the Annulus Theorem (Theorem \ref{Annulus:thm}), there exist homeomorphisms
\[\varphi\colon \S^n\setminus (U_1\cup U_2)\to \S^{n-1}\times[0,1]\quad \text{and}\quad \varphi'\colon \S^n\setminus (U_1'\cup U_2')\to \S^{n-1}\times[0,1].
 \]
Next, by the Isotopy Theorem (Theorem \ref{Isotopy:thm}), there exists a homeomorphism $F\colon \S^{n-1}\times[0,1]\to \S^{n-1}\times[0,1]$ which is an
isotopy between $\varphi'\circ h_0\circ \varphi^{-1}|_{\S^{n-1}\times \{0\}}$ and $\varphi'\circ h_1\circ \varphi^{-1}|_{\S^{n-1}\times \{1\}}$.
Then the homeomorphism  $h\coloneqq {\varphi'}^{-1}\circ F \circ \varphi \colon \S^n\setminus (U_1\cup U_2) \to \S^n\setminus (U_1'\cup U_2')$ extends
$h_1|_{\partial U_1}$ and $h_2|_{\partial U_2}$,  and $h$ may be extended homeomorphically to a map  $\S^n\to \S^n$ as claimed in the proposition.

Consider now the case when there are  $N$ open sets in each collection, and $N$ boundary homeomorphisms $h_i\colon \partial U_i \to \partial U_i'$, $i\in \{1,\dots,N\}$. Assume, by the induction hypothesis, that the proposition has been proved when the number is $N-1$. Fix now a homeomorphism  $g \colon \S^n\to \S^n$  which extends $h_i\colon \partial U_i \to \partial U_i'$ for $i\in \{1,\dots,N-1\}$.

We claim that there is a flat $n$-cell $D$ which contains $\br{U'_N}$ and $g(\br{U_N})$ in its interior and keeps $\bigcup_{i=1}^{N-1} \br{U'_i}$ in its complement.

To this end, one can first convert all cells $\br{U_i'}$, $i=1,\dots,N-1$, to round balls, by the induction assumption, with a global homeomorphism of $\S^n$. Thus, we suppose that $\br{U_i'}$, $i=1,\dots,N-1$, are round balls. Fix next $N-2$ pairwise disjoint flat $n$-cells, $L_1, \ldots, L_{N-2}$, contained in
 \[W\coloneqq \S^n\setminus \left( \br{U'_N} \cup g(\br{U_N}) \cup \bigcup_{i=1}^{N-1} U_i' \right), \]
which have the properties that (a) $L_i \cap \br{U_i'}$ and $L_i \cap \br{U_{i+1}'}$ are flat $(n-1)$-cells in $\partial U_i'$ and $\partial U_{j+1}'$, respectively, and $L_i\cap \br{U_j'}=\emptyset$ for $j\neq i, i+1$, (b)
 the set $E\coloneqq \br{U_1'}\cup L_1\cup\cdots\cup \br{U_{N-2}'}\cup L_{N-2}\cup \br{U_{N-1}'}$ is a flat $n$-cell, and  (c) the set $\br{\S^n\setminus E}$ is also a flat $n$-cell which contains $\br{U'_N} \cup g(\br{U_N})$ in its interior.
 Each of these $n$-cells $L_i$ can be constructed by connecting the balls $\br{U_i'}$ and $\br{U_{i+1}'}$ with a smooth path in $W$ and then fattening the path to obtain a smooth cylinder $L_i$.
By shrinking the flat cell $\br{\S^n\setminus E}$ we may obtain  a flat $n$-cell $D\subset \S^n\setminus E$ with the claimed properties.

Applying the initial step (for $N=2$) to the region $D\setminus  g(U_N)$, we obtain
a homeomorphism $f\colon D\setminus  g(U_N) \to D\setminus  U'_N$ which agrees with
the homeomorphisms $h_N \circ g^{-1}|_{\partial (g(U_N))}$ and $\text{id}|_{\partial D}$ on the boundary. The map $h=f \circ g\colon \S^n \setminus \bigcup_{i=1}^N  U_i \to \S^n \setminus \bigcup_{i=1}^N  U'_i $, satisfying $h|_{\partial U_i}=h_i$,  may then be extended to a homeomorphism $\S^n\to \S^n$.
\end{proof}

\begin{corollary}\label{Extension:cor}
Let $n\geq 3$,   $U_1,\dots,U_N\subset \S^n$ be open sets having pairwise disjoint closures and for which $\S^n\setminus U_1,\dots,\S^n\setminus U_N$ are $n$-cells, and let $U'_1,\dots,U'_N\subset\S^n$ be another collection of open sets with the same properties.
Let $h_i\colon \partial U_i \to \partial U_i'$, $i\in \{1,\dots,N\}$, be orientation-preserving homeomorphisms.  Then there exists a homeomorphism \[h\colon \S^n \setminus \bigcup_{i=1}^N  U_i \to \S^n \setminus \bigcup_{i=1}^N  U'_i  \] which extends $h_i$ for $i\in \{1,\dots,N\}$.
\end{corollary}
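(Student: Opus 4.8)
The plan is to reduce Corollary~\ref{Extension:cor} to Proposition~\ref{Extension:prop} by peeling a \emph{tame collar} off each of the cells $C_j\coloneqq\S^n\setminus U_j$ and $C_j'\coloneqq\S^n\setminus U_j'$, so that $\partial C_j=\partial U_j$ and $\partial C_j'=\partial U_j'$. Since $\partial U_j$ may be a wildly embedded sphere, it cannot be flattened by any self-homeomorphism of $\S^n$, so Proposition~\ref{Extension:prop} does not apply to the $U_j$ directly. What saves the argument is that each $C_j$, although possibly wildly embedded, is still an $n$-cell, so its boundary sphere admits a collar embedding $c_j\colon\partial C_j\times[0,1]\hookrightarrow C_j$ (and likewise $c_j'\colon\partial C_j'\times[0,1]\hookrightarrow C_j'$); this collar is a genuine product even when the sphere $\partial U_j$ is wild. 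Note that, in contrast to Proposition~\ref{Extension:prop}, the argument below will need no induction on $N$.

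First I would fix such collars and a small $\delta\in(0,1)$, and put $A_j\coloneqq c_j(\partial C_j\times[0,\delta])$ and $S_j\coloneqq c_j(\partial C_j\times\{\delta\})$. Because the $\br{U_j}$ are pairwise disjoint compacta and $A_j$ shrinks to $\partial C_j$ as $\delta\to 0$, for $\delta$ small enough the ``collar shells'' $A_1,\dots,A_N$ are pairwise disjoint and each $A_j$ is disjoint from $\br{U_k}$ for $k\ne j$; the analogous statements hold on the target side for $A_j'$ and $S_j'$. Each sphere $S_j$ is bicollared in $\inter(C_j)$, hence in $\S^n$, so by the Schoenflies theorem it is a flat $(n-1)$-sphere; consequently $E_j\coloneqq C_j\setminus c_j(\partial C_j\times[0,\delta))$ is a flat $n$-cell with $\partial E_j=S_j$, and $V_j\coloneqq\S^n\setminus E_j=U_j\cup c_j(\partial C_j\times[0,\delta))$ is an open set with flat sphere boundary $S_j$ and closure $\br{V_j}=\br{U_j}\cup A_j$. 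By the choice of $\delta$, the sets $\br{V_1},\dots,\br{V_N}$ are pairwise disjoint, and similarly for the $\br{V_j'}$. (Incidentally $\br{V_j}$ is a small fattening of $\br{U_j}$, not a large set: $\inter(E_j)$ is most of $\S^n$, and $\inter(E_j)\cup\inter(E_k)=\S^n$ because $\br{U_j}\cap\br{U_k}=\emptyset$.)

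Next I would extend each $h_j$ across its shell by $\hat h_j\colon A_j\to A_j'$, $c_j(x,t)\mapsto c_j'(h_j(x),t)$; this is a homeomorphism restricting to $h_j$ on $\partial U_j$ and to an orientation-preserving homeomorphism $s_j\coloneqq\hat h_j|_{S_j}\colon S_j\to S_j'$ of flat spheres. Applying Proposition~\ref{Extension:prop} to the families $\{V_j\}$ and $\{V_j'\}$ with the boundary maps $s_j$ produces a homeomorphism $H\colon\S^n\to\S^n$ extending every $s_j$, and an orientation check gives $H(V_j)=V_j'$, hence $H(E_j)=E_j'$, for every $j$. Finally I would glue: one has $\S^n\setminus\bigcup_j U_j=\bigcap_j C_j=\bigl(\bigcap_j E_j\bigr)\cup\bigcup_j A_j$ with the $A_j$ pairwise disjoint and $A_j\cap\bigcap_k E_k=S_j$, and the same decomposition holds downstairs; since $H|_{S_j}=s_j=\hat h_j|_{S_j}$, the map $h$ that equals $H$ on $\bigcap_j E_j$ and equals $\hat h_j$ on each $A_j$ is well defined, it is a continuous bijection between the compact Hausdorff spaces $\S^n\setminus\bigcup_j U_j$ and $\S^n\setminus\bigcup_j U_j'$, hence a homeomorphism, and it restricts to $h_j$ on $\partial U_j\subset A_j$, as required.

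I expect the conceptual crux to be exactly this reduction — recognizing that the wildness of $\partial U_j$ is harmless because $C_j$ is an abstract $n$-cell and therefore has a collared boundary, so that cutting along the flat sphere $S_j$ separates a genuine product shell $A_j$ from a flat cell $E_j$ to which Proposition~\ref{Extension:prop} can be applied. The remaining care is routine but needs attention: (a) the flatness of $S_j$, which uses only bicollaredness together with the topological Schoenflies theorem and is therefore available in every dimension $n\ge 2$; (b) choosing the collars thin enough that the families $\{\br{V_j}\}$ and $\{\br{V_j'}\}$ stay disjoint, so that Proposition~\ref{Extension:prop} is genuinely applicable; and (c) the orientation bookkeeping ensuring that $s_j$ is orientation-preserving for the correct boundary orientations, so that the global homeomorphism $H$ sends $V_j$, rather than the other complementary domain of $S_j$, onto $V_j'$.
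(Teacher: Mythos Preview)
Your proof is correct and follows essentially the same strategy as the paper: reduce to Proposition~\ref{Extension:prop} by pushing each boundary sphere $\partial U_j$ slightly into the $n$-cell $C_j=\S^n\setminus U_j$ along a collar to obtain a flat sphere, apply the proposition to the resulting families, and then fill the thin collar shells by the obvious product extension of $h_j$. The paper phrases the same maneuver more tersely, citing Cannon's Lemma~0 to produce an embedding $\psi\colon \S^n\setminus\bigcup U_i\hookrightarrow \S^n\setminus\bigcup U_i$ with flat image boundaries (which amounts exactly to your collar push-in), while you spell out the collar, the Schoenflies step for flatness of $S_j$, and the gluing explicitly; but the underlying idea and the reduction are the same.
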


Corollary \ref{Extension:cor} follows from Proposition \ref{Extension:prop} as follows.  Following the proof of Lemma 0 in \cite{Cannon:Sierpinski}, we can ``enlarge"  the complementary components $U_i$, $i\in \{1,\dots,N\}$,
slightly by an embedding
$\psi\colon  \S^n\setminus \bigcup_{i=1}^N  U_i \hookrightarrow \S^n\setminus \bigcup_{i=1}^N  U_i $ in such a way that the boundary components $\psi(\partial U_i)$ of the new regions are flat $(n-1)$-spheres. We do the same for $\S^n\setminus \bigcup_{i=1}^N  U'_i $ by an embedding $\psi'$. We apply Proposition \ref{Extension:prop} to the new regions to find a homeomorphic extension, and  then use $\psi^{-1}$ and $ \psi'^{-1}$ to pull the extension back to the original regions.
\medskip

We also need the following approximation theorem for cell-like and cellular maps; recall that cellular maps are cell-like.

\begin{theorem}[Approximation Theorem for Cell-like/Cellular Maps]\label{Approximation:thm}
Let $M,N$ be $n$-manifolds without boundary and let $f\colon M \to N$ be a cell-like map when $n\geq 4$, or a  cellular map when  $n=3$. Suppose that $\rho$ is a metric on $N$, $C$ is a closed subset of $N$ for which $f|_{f^{-1}(C)}$ is injective, and $\epsilon \colon N\to [0,\infty)$ is a continuous function satisfying $\epsilon^{-1}(0)=C$. Then there is a homeomorphism $g\colon M\to N$ satisfying
$\rho(g(x),f(x))<\epsilon(f(x))$ for all $x\in M\setminus f^{-1}(C)$ and $g|_{f^{-1}(C)}=f|_{f^{-1}(C)}$.
\end{theorem}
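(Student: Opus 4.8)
The plan is not to give a self-contained argument: this is the classical cell-like (CE) approximation theorem, a cornerstone of geometric topology, and the approach is to assemble the absolute form from the literature and then extract the controlled, relative refinement stated here. The absolute statement---a proper cell-like map between $n$-manifolds without boundary can be uniformly approximated by homeomorphisms---is due to Siebenmann for $n\geq 5$ (with the definitive version by Edwards). There the proof runs through decomposition-space theory: the point-inverses $f^{-1}(y)$ form a cell-like, upper semicontinuous decomposition of $M$, and one must show that the quotient is again a manifold and the quotient map a near-homeomorphism; the crucial input special to dimension $\geq 5$ is that such a manifold automatically has the Disjoint Disks Property (two generic $2$-disks in dimension $\geq 5$ can be pushed apart by general position), which via the shrinking criterion makes the decomposition shrinkable. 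For $n=4$ the same conclusion is a theorem of Quinn, resting on his controlled topological handle theory in dimension $4$; for $n=3$ the corresponding statement for \emph{cellular} maps is Armentrout's theorem on cellular decompositions of $3$-manifolds; and for $n=2$ it is classical (Moore), with cell-like and cellular coinciding there. A unified treatment is in \cite{Daverman:decompositions}.

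Granting the absolute version, I would obtain the relative/controlled form---$\rho(g(x),f(x))<\epsilon(f(x))$ off $f^{-1}(C)$ and $g|_{f^{-1}(C)}=f|_{f^{-1}(C)}$, with $\epsilon^{-1}(0)=C$ and $f|_{f^{-1}(C)}$ injective---by a patching argument. First, $f$ is closed and proper (cell-like maps are proper by definition, and in the intended applications $M\approx\S^n$ is compact anyway), so it restricts to a proper cell-like surjection $M\setminus f^{-1}(C)\to N\setminus C$ of open $n$-manifolds, to which the $\epsilon$-controlled absolute approximation applies over $N\setminus C$: let $g_0\colon M\setminus f^{-1}(C)\to N\setminus C$ be a homeomorphism with $\rho(g_0(x),f(x))<\epsilon(f(x))$. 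Because $f|_{f^{-1}(C)}$ is a homeomorphism onto $C$ and $\epsilon(f(x))\to 0$ as $x$ tends to $f^{-1}(C)$, the map equal to $g_0$ off $f^{-1}(C)$ and to $f$ on $f^{-1}(C)$ is a continuous bijection $M\to N$; by compactness of $M$---or, in general, since $g$ is proper and $N$ is locally compact Hausdorff, hence $g$ is closed---it is a homeomorphism, and it is the required $g$. Equivalently, this controlled statement is precisely the packaging in which Siebenmann's and Quinn's theorems are usually quoted, so one may simply cite it outright.

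The hard part is entirely the absolute statement in low dimensions, and it is not something I would attempt to reprove: for $n=4$ everything rests on Quinn's deep disk-embedding machinery, and for $n=3$ the restriction from cell-like to \emph{cellular} maps is genuinely needed, since Armentrout's approximation theorem is specific to cellular decompositions of $3$-manifolds. (In the applications here this costs nothing: after the reduction in Lemma \ref{decomposition:lemma} the decomposition maps have point-inverses that are flat $n$-cells, hence cellular.) The only part I would actually carry out is the routine point-set verification underlying the patching---that the restriction of $f$ to $M\setminus f^{-1}(C)\to N\setminus C$ is still a proper cell-like surjection (which follows from closedness of $f$ and $\epsilon^{-1}(0)=C$, using that cell-likeness of a compact set is intrinsic), and that the glued map is continuous and bijective.
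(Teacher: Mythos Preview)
Your proposal is correct and matches the paper's approach: the paper does not prove this theorem at all but simply cites the literature (Armentrout for $n=3$, Siebenmann for $n\geq 5$ and $n=3$, Ancel for $n=4$, and Daverman--Venema for the packaged relative form), exactly as you suggest. Your additional sketch of how to pass from the absolute to the relative/controlled statement is sound and more detailed than what the paper provides; note only that for $n=4$ the paper attributes the result to Ancel rather than Quinn, and that the relative form is already stated as such in the cited references, so the patching argument---while correct in the compact case relevant here---need not be carried out.
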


Theorem \ref{Approximation:thm} was proved for dimension $3$ in \cite{Armentrout} and, for $3$-manifolds with boundary, in \cite{Armentrout_with_boundary}; for  dimension $n\geq 5$ and, by a different method, for $n=3$ in \cite{Siebenmann}; and for dimension $4$ in \cite{Ancel}. The statement above  is adapted from the cited works above,  and also
\cite[Corollary 7.4.3]{Daverman-Venema}. For our application, we need the following corollary.

\begin{corollary}\label{Approximation:cor}
Let $f\colon M\to N$ be a cellular map between $n$-manifolds without boundary, where $n\geq 3$. Let $B\subset N$ be an $n$-cell and $C=\partial B$.  Suppose that $f$ is injective on $f^{-1}(C)$. Then $f^{-1}(B)$ is an $n$-cell in $M$, whose boundary is $f^{-1}(C)$.
\end{corollary}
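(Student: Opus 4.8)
The plan is to derive Corollary~\ref{Approximation:cor} from the Approximation Theorem (Theorem~\ref{Approximation:thm}) by a direct application with a carefully chosen target metric and error function. First I would restrict attention to the open manifold $\inter B$ and its preimage. Since $f$ is cellular, it is in particular cell-like, and cell-likeness is preserved under restriction over open sets, so $f|\colon f^{-1}(\inter B)\to \inter B$ is a cell-like (cellular) map between the $n$-manifolds-without-boundary $f^{-1}(\inter B)$ and $\inter B$. Now pick any metric $\rho$ on $N$; restrict it to $\inter B$. The set $C=\partial B$ is not a subset of $\inter B$, so instead I would work on a slightly smaller concentric $n$-cell. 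Concretely, fix a homeomorphism $B\cong [-1,1]^n$ and let $C'$ be the closed subset of $\inter B$ corresponding to $\partial\bigl([-1/2,1/2]^n\bigr)$, or rather I would build an exhaustion; the cleanest route is the following.

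Take a collar: because $B$ is an $n$-cell and $C=\partial B$ is bicollared in $N$ (it bounds the $n$-cell $B$ on one side and, by hypothesis that $N$ is a manifold without boundary, is locally flat, hence bicollared — this uses that $\partial B$ is flat in the sense that $B$ is a genuine $n$-cell embedded with $\partial B$ collared in $B$), we may choose a homeomorphism $\Phi\colon \partial B\times[0,1]\to B$ onto a closed collar neighborhood of $\partial B$ in $B$ with $\Phi(x,0)=x$. Then for the Approximation Theorem I apply it with the closed set being $C=\partial B$ itself, viewed inside $N$: set $\epsilon\colon N\to[0,\infty)$ to be any continuous function with $\epsilon^{-1}(0)=C$, for instance $\epsilon(y)=\rho(y,C)$ (using that $\rho$ is bounded, or capping it). The hypothesis of Theorem~\ref{Approximation:thm} that $f|_{f^{-1}(C)}$ is injective is exactly our assumption. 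The theorem then produces a global homeomorphism $g\colon M\to N$ with $g|_{f^{-1}(C)}=f|_{f^{-1}(C)}$ and $\rho(g(x),f(x))<\epsilon(f(x))$ off $f^{-1}(C)$.

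The key point is then to check that $g(f^{-1}(B))=B$, which immediately gives that $f^{-1}(B)=g^{-1}(B)$ is an $n$-cell (homeomorphic image of $B$ under the homeomorphism $g^{-1}$) with boundary $g^{-1}(\partial B)=g^{-1}(C)$; and $g^{-1}(C)=f^{-1}(C)$ since $g$ agrees with $f$ on $f^{-1}(C)$ and, conversely, any point with $g$-image in $C$ must lie in $f^{-1}(C)$ because off that set the estimate $\rho(g(x),f(x))<\epsilon(f(x))=\rho(f(x),C)$ forces $g(x)\notin C$. So it remains to show $g(f^{-1}(B))=B$. For ``$\subseteq$'': if $x\in f^{-1}(B)$ then $f(x)\in B$; if $f(x)\in C$ then $g(x)=f(x)\in B$, while if $f(x)\in\inter B$ then $\rho(g(x),f(x))<\epsilon(f(x))\le \rho(f(x),\partial B)$, so $g(x)$ lies in the open ball about $f(x)$ of radius $\rho(f(x),\partial B)$, which is contained in $\inter B$ provided the metric balls behave well — here one must be slightly careful, since a metric ball need not be connected or contained in $B$. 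To fix this I would instead choose $\epsilon$ adapted to the cell structure: using the collar $\Phi$ and a homeomorphism $B\cong[-1,1]^n$, define $\epsilon$ on $B$ so that $\{y'\in N:\rho(y,y')<\epsilon(y)\}$ — after possibly shrinking $\epsilon$ pointwise, which is allowed — is contained in $\inter B$ for every $y\in\inter B$, and $\epsilon^{-1}(0)\cap B=\partial B$; such an $\epsilon$ exists by continuity and compactness (cover $\inter B$ by small balls, take a subordinate partition-of-unity bound). Extend $\epsilon$ to all of $N$ with $\epsilon^{-1}(0)=C$. With this choice the inclusion $g(f^{-1}(\inter B))\subseteq\inter B$ is automatic.

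For the reverse inclusion $g(f^{-1}(B))\supseteq B$: $g$ is a homeomorphism of $M$ onto $N$, so $g(M)=N\supseteq B$; I must show every $y\in B$ is hit from inside $f^{-1}(B)$, i.e. $g^{-1}(y)\in f^{-1}(B)$, equivalently $f(g^{-1}(y))\in B$. Write $x=g^{-1}(y)$. If $y\in C$ then $x\in f^{-1}(C)\subseteq f^{-1}(B)$. If $y\in\inter B$, suppose for contradiction $f(x)\notin B$. Consider the set $S=f^{-1}(\inter B)$; it is open, $g(S)\subseteq\inter B$ by the previous paragraph, and $g$ restricted to the closure is controlled. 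The clean argument: $g(\partial f^{-1}(B))\subseteq\partial B=C$ because on $f^{-1}(C)=\partial f^{-1}(B)$ — note $f^{-1}(C)$ is the frontier of $f^{-1}(B)$ since $f$ is a closed map (continuous, $M$, $N$ compact... but here $M,N$ are open manifolds; however in the application of this corollary $M,N$ arise as interiors of compact manifolds, so I would either add the hypothesis that $f$ is proper, or note $f$ is cellular hence proper in the relevant sense) — $g=f$ there. Then $g$ maps the compact set $\overline{f^{-1}(B)}$ onto a compact set containing $\inter B$ in its interior-image sense, with boundary mapping into $C$; by invariance of domain $g(\inter f^{-1}(B))$ is open, contained in $\inter B$, with frontier in $C$, hence equals $\inter B$ (a nonempty clopen subset of the connected set $\inter B$). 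Therefore $g(f^{-1}(B))=B$.

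The main obstacle I anticipate is the bookkeeping around the manifold-with-boundary versus without-boundary issue and properness: Theorem~\ref{Approximation:thm} as stated is for manifolds without boundary, so one must pass to interiors, but then one loses compactness and must restore control near $\partial B$ by hand via the collar and the tailored $\epsilon$. The genuinely substantive input — that cell-like maps between manifolds can be approximated by homeomorphisms relative to a set where they are already injective — is entirely supplied by Theorem~\ref{Approximation:thm}; the corollary is a packaging exercise, but the packaging requires the nontrivial facts that $\partial B$ is collared (flatness of the cell) and that $f^{-1}(B)$ is a manifold-with-boundary whose frontier is precisely $f^{-1}(\partial B)$, both of which I would establish before invoking the approximation theorem.
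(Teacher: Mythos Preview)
The paper does not give a proof of this corollary; it simply records it as a consequence of the Approximation Theorem (Theorem~\ref{Approximation:thm}) and moves on. Your approach is exactly the intended one: apply Theorem~\ref{Approximation:thm} globally on $M,N$ with closed set $C=\partial B$ and an $\epsilon$ chosen so small on $\inter B$ that the resulting homeomorphism $g$ satisfies $g(f^{-1}(\inter B))\subseteq\inter B$, then conclude $f^{-1}(B)=g^{-1}(B)$ via the clopen argument. That argument is correct as you wrote it at the end.

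Two remarks. First, the detours through collars, bicollaredness, and restricting to $\inter B$ are unnecessary and in fact misleading: nothing in the argument requires $\partial B$ to be flat or collared in $N$, and an Alexander horned ball would be a perfectly good $B$ here. All you need is that $\inter B$ is open (so you can shrink $\epsilon$ pointwise to keep $g$-images inside), which you eventually use anyway. Second, your clopen argument tacitly needs $f^{-1}(\inter B)\neq\emptyset$, i.e., that $f$ is surjective; this is implicit in the usual conventions for cellular/cell-like maps between manifolds (they are proper and surjective) and certainly holds in the paper's application, but it is worth stating. The worry about properness is legitimate in general but moot in the paper's setting, where $M$ and $N$ are both $\S^n$.
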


\bigskip

\bibliography{biblio-Sierpinski}

\end{document}